\DeclareFontShape{OT1}{cmss}{m}{sc}{<->ssub*xcmss/m/sc}{}
\newtheorem{definition}{Definition}[section]
\newtheorem{lemma}[definition]{Lemma}
\newtheorem{theorem}[definition]{Theorem}
\newtheorem{proposition}[definition]{Proposition}
\newtheorem{corollary}[definition]{Corollary}
\newtheorem*{theorem*}{Theorem}
\newtheorem*{corollary*}{Corollary}
\newtheorem*{lemma*}{Lemma}
\newtheorem*{proposition*}{Proposition}
\theoremstyle{remark}
\newtheorem{remark}[definition]{Remark}
\DeclareMathOperator{\GL}{GL}
\DeclareMathOperator{\End}{End}
\DeclareMathOperator{\Mod}{Mod}
\DeclareMathOperator{\charstc}{char}
\DeclareMathOperator{\dgend}{dg-End}
\DeclareMathOperator{\per}{per}
\DeclareMathOperator{\Ann}{Ann}
\newcommand{\mb}[1]{\mathbb{#1}}
\newcommand{\derivedcat}{D}
\newcommand{\bounder}{{\derivedcat}^b}
\newcommand{\adMod}[1]{\derivedcat(#1)}
\newcommand{\adbmod}[1]{\bounder_{fg}(#1)}
\newcommand{\cx}[1]{#1^{\bullet}}
\newcommand{\gencx}{\cx{\ourgen}}
\newcommand{\padicfield}{F}
\newcommand{\resfield}{k}
\newcommand{\roi}{\mathcal{O}}
\newcommand{\indicate}{1}
\newcommand{\coeffs}{R}
\newcommand{\triv}{\mathbbm{1}}
\newcommand{\finite}[1]{#1_f}
\newcommand{\alggp}[1]{\mathrm{#1}}
\newcommand{\padicgp}{G}
\newcommand{\redgp}{\alggp{\padicgp}}
\newcommand{\gln}{\alggp{GL}_{n}}
\newcommand{\glnp}{\gln(\padicfield)}
\newcommand{\fingp}{\finite{G}}
\newcommand{\glnf}{\gln(\resfield)}
\newcommand{\iwa}{I}
\newcommand{\parhor}{J}
\newcommand{\maxcomp}{K}
\newcommand{\prop}[1]{#1^1}
\newcommand{\propiwa}{\prop{\iwa}}
\newcommand{\propmax}{\prop{\maxcomp}}
\newcommand{\weyl}{W}
\newcommand{\torus}{T}
\newcommand{\splittor}{S}
\newcommand{\phs}{\mathcal{J}}
\newcommand{\fphs}{\finite{\phs}}
\newcommand{\finbor}{\finite{\iwa}}
\newcommand{\finuni}{\finite{\propiwa}}
\newcommand{\finpara}{\finite{\parhor}}
\newcommand{\finuniopp}{\oppos{\finuni}}
\newcommand{\fintor}{\finite{\torus}}
\newcommand{\finweyl}{\finite{\weyl}}
\newcommand{\finlevi}{{\levip}_{\finpara}}
\newcommand{\finleviuni}{\unip_{\finpara}}
\newcommand{\genchars}{X_{\finpara}}
\newcommand{\genchar}{\chi_{\finpara}}
\newcommand{\unip}{U}
\newcommand{\levip}{M}
\newcommand{\oppos}[1]{\bar{#1}}
\newcommand{\symn}{\mathfrak{S}_n}
\newcommand{\hecke}{\mathsf{H}}
\newcommand{\hecgln}[1]{\hecke_{#1}(n)}
\newcommand{\hecglnco}{\hecgln{\coeffs}}
\newcommand{\schur}{\mathsf{S}}
\newcommand{\schgln}[1]{\schur_{#1}(n)}
\newcommand{\schglnco}{\schgln{\coeffs}}
\newcommand{\finschur}{\finite{\schgln{\coeffs}}}
\newcommand{\smrep}{\Mod(\padicgp)}
\newcommand{\hecglob}{\hecke(\padicgp)}
\newcommand{\block}{\mathcal{B}}
\newcommand{\uniblock}{\block_{1}(\padicgp)}
\newcommand{\anniblock}{\block_{1}'(\padicgp)}
\newcommand{\nonuniblock}{\block_{\neq 1}(\padicgp)}
\newcommand{\unihec}{\hecke_{1}(\padicgp)}
\newcommand{\annihec}{\hecke_{1}'(\padicgp)}
\newcommand{\nonunihec}{\hecke_{\neq 1}(\padicgp)}
\newcommand{\annihilator}{\mathcal{I}}
\newcommand{\finanni}{\finite{\annihilator}}
\newcommand{\finrep}{\Mod(\fingp)}
\newcommand{\hecfin}{\hecke(\fingp)}
\newcommand{\hecfinuni}{\hecke_{1}(\fingp)}
\newcommand{\hecfinanni}{\hecke_{1}'(\fingp)}
\newcommand{\hecfinonu}{\hecke_{\neq 1}(\fingp)}
\newcommand{\uniblockfin}{\block_{1}(\fingp)}
\newcommand{\anniblockfin}{\block_{1}'(\fingp)}
\newcommand{\nonunblofin}{\block_{\neq 1}(\fingp)}
\newcommand{\heccomp}{\hecke(\maxcomp)}
\newcommand{\annigen}{Q}
\newcommand{\ourgen}{V}
\newcommand{\unigen}{\Gamma}
\newcommand{\project}{P}
\newcommand{\funig}{\finite{\unigen}}
\newcommand{\fourg}{\finite{\ourgen}}
\newcommand{\fannig}{\finite{\annigen}}
\newcommand{\finproj}{\finite{\project}}
\newcommand{\funct}[1]{\mathrm{#1}}
\newcommand{\ind}{\funct{ind}}
\newcommand{\infl}{\funct{infl}}
\newcommand{\phorind}{\funct{I}}
\newcommand{\finbigschur}{\finite{\schur_{\coeffs}(N,n)}}
\newcommand{\iwadiag}{\iwa_0}
\title{The Derived $l$-Modular Unipotent Block of $p$-adic $\GL_n$}
\author{Rose Berry}
\begin{document}

\maketitle

\begin{abstract}
    For a non-Archimedean local field $\padicfield$ of residue cardinality $q=p^r$, we give an explicit classical generator $\ourgen$ for the bounded derived category $\adbmod{\unihec}$ of finitely generated unipotent representations of $\padicgp=\glnp$ over an algebraically closed field of characteristic $l\neq p$. The generator $\ourgen$ has an explicit description that is much simpler than any known progenerator in the underived setting. This generalises a previous result of the author in the case where $n=2$ and $l$ is odd dividing $q+1$, and provides a triangulated equivalence between $\adbmod{\unihec}$ and the category of perfect complexes over the dg algebra of dg endomorphisms of a projective resolution of $\ourgen$. This dg algebra can be thought of as a dg-enhanced Schur algebra. As an intermediate step, we also prove the analogous result for the case where $\padicfield$ is a finite field.
\end{abstract}

\section{Introduction}

Let $\padicfield$ be a non-Archimedean local field of residue cardinality $q=p^r$, and let $\padicgp=\glnp$. In \cite{bernstein1984centre} it was shown that the category of smooth complex representations of $\padicgp$ decomposes into blocks, and in \cite{bushnell1999types} each block was associated to a type, which gives an explicit progenerator for the block. Furthermore, the endomorphisms of this progenerator were shown to be a finite product of extended affine Hecke algebras of type A with powers of $q$ as parameters.

In particular, the unipotent block $\uniblock$, that is, the block containing the trivial representation $\triv$, has progenerator $\project=\ind_{\iwa}^{\padicgp}\triv$ for $\iwa$ an Iwahori subgroup, and $\End(P)$ is the extended affine Hecke algebra $\hecgln{\mathbb{C}}$ of type $A_{n-1}$ and parameter $q$. Hence in particular each block is equivalent to some $\block_1(H)$, where $H$ is a finite product of general linear groups over finite extensions of $\padicfield$.

The work of \cite{bernstein1984centre} holds for any reductive group, and the theory of types has since been expanded to many other cases. In particular, types have been constructed for inner forms of $\gln$ in \cite{secherre2012smooth}, for classical groups with $p\neq 2$ in \cite{miyauchi2014semisimple}, and for tamely ramified groups where $p$ does not divide the order of the Weyl group in \cite{fintzen2021construction,fintzen2021types}. In all cases these provide a progenerator for the block and an associated twisted affine Hecke algebra.

One ongoing avenue of research is to consider representations over more general coefficient rings. Let $\coeffs$ be an algebraically closed field of characteristic $l\neq p$. In \cite{vigneras1998induced} it was shown that Bernstein's block decomposition still holds for the category $\smrep$ of smooth representations of $\padicgp$ over $\coeffs$. In \cite{chinello2018blocks,dat2018equivalences} it is shown that each block is still equivalent to some $\block_1(H)$. In the banal case, that is, when $l$ does not divide $q^e-1$ for any $1\leq e\leq n$, it is expected that $\project$ is still a generator for $\uniblock$. However, in the non-banal case, $\project$ is no longer a progenerator for $\uniblock$, and the full structure of the block remains unknown.

In \cite{vigneras2003schur}, a more complicated representation $\unigen$ is constructed using Gelfand-Graev representations. Let $\annihilator=\Ann(\project)$. It is shown that the subcategory $\anniblock$ of $\uniblock$, whose objects are the representations $M$ such that $\annihilator M=0$, has progenerator $\annigen=\unigen/\annihilator\unigen$, and that the endomorphism algebra of $\annigen$ is the Schur algebra $\schglnco$, which is closely related to $\hecglnco$. Furthermore, it is shown that there is some $N$ such that $\annihilator^N\uniblock=0$, and that the Schur algebra is also the endomorphism algebra of the simpler representation $\ourgen=\bigoplus_{\parhor\in\phs} \ind_{\parhor}^{\padicgp}\triv$, where $\phs$ is a set of standard parahoric subgroups of $\padicgp$.

Building on this, in Theorems 6.8 and 6.10 of \cite{berry2024derived} the author found a description of the derived category of $\uniblock$ for $n=2$ and $l$ odd dividing $q+1$. Specifically, we found two classical generators for the bounded derived category of finitely generated unipotent representations:
\begin{theorem}[Main Theorem]\label{main theorem}
    $\adbmod{\unihec}=\langle \annigen\rangle_{\adbmod{\unihec}}=\langle \ourgen\rangle_{\adbmod{\unihec}}$.
\end{theorem}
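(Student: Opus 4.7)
The plan is to use the nilpotence $\annihilator^N\uniblock=0$ to reduce everything to the subcategory $\anniblock$. Given a finitely generated $M\in\uniblock$, the finite filtration $M\supseteq\annihilator M\supseteq\cdots\supseteq\annihilator^N M=0$ has subquotients annihilated by $\annihilator$, hence lying in $\anniblock$. The induced short exact sequences give distinguished triangles in $\adbmod{\unihec}$ expressing $M$ as an iterated extension of objects of $\anniblock$. Consequently, any classical generator of the image of $\anniblock$ in $\adbmod{\unihec}$ is automatically a classical generator of all of $\adbmod{\unihec}$.

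To prove $\langle\annigen\rangle=\adbmod{\unihec}$, recall that $\annigen$ is a progenerator of $\anniblock$ with $\End(\annigen)=\schglnco$, so Morita theory gives an equivalence of abelian categories $\anniblock\simeq\schglnco\dbmod$ sending $\annigen$ to the regular module. Since $\schglnco$ is a quasi-hereditary $q$-Schur algebra, it has finite global dimension, so every finitely generated $\schglnco$-module admits a bounded projective resolution. Pulling these back shows that every object of $\anniblock$ lies in $\langle\annigen\rangle_{\adbmod{\unihec}}$, and combined with the filtration argument this gives $\adbmod{\unihec}=\langle\annigen\rangle$.

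For $\langle\ourgen\rangle=\langle\annigen\rangle$, I would prove the two inclusions separately. The inclusion $\ourgen\in\langle\annigen\rangle$ is a direct application of the filtration argument to $\ourgen$ itself, whose $\annihilator$-subquotients lie in $\anniblock\subseteq\langle\annigen\rangle$ by the previous step. The reverse inclusion $\annigen\in\langle\ourgen\rangle$ is substantially harder: one needs a bounded complex of direct summands of finite sums of $\ourgen$ quasi-isomorphic to $\annigen$. The equality $\End(\ourgen)=\End(\annigen)=\schglnco$ is what makes this plausible, since both generators carry the same Schur-algebraic endomorphism structure and ought to be linked by an explicit two-sided complex over this common algebra.

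The main obstacle is constructing this bounded resolution of $\annigen$ by summands of $\ourgen$, and I expect this is precisely where the finite-field analogue promised in the abstract enters as an intermediate step. For $\fingp=\glnf$, both generators admit explicit Harish-Chandra induction descriptions from Levi subgroups, allowing a combinatorial attack via the symmetric group or Hecke algebra side of Schur–Weyl duality. The resulting resolution should then lift to the $p$-adic setting through parahoric induction/restriction functors connecting $\uniblock$ to the unipotent representations of the finite reductive quotients of the parahoric subgroups $\parhor\in\phs$.
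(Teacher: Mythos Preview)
Your outline matches the paper's strategy closely: the nilpotent filtration $\annihilator^N\uniblock=0$ plus finite global dimension of $\schglnco$ gives the first equality, and the second is obtained by proving a finite-field analogue and lifting via parahoric induction. Two points deserve comment.

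First, a small omission in the filtration argument: you need the subquotients $\annihilator^i M/\annihilator^{i+1}M$ to remain \emph{finitely generated} in order to stay inside $\adbmod{\unihec}$. The paper handles this by invoking Noetherianity of $\unihec$ (\cref{everything is noetherian}), which forces $\annihilator_1$ to be finitely generated as an ideal; without this, the filtration step is incomplete.

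Second, and more substantially, you underestimate the work hidden in ``the resulting resolution should then lift to the $p$-adic setting through parahoric induction.'' Parahoric induction is exact and manifestly sends $\fourg$ to $\ourgen$, so the finite theorem immediately yields $\langle \phorind_{\fingp,\maxcomp}^{\padicgp}\fannig\rangle=\langle\ourgen\rangle$. But $\phorind_{\fingp,\maxcomp}^{\padicgp}\fannig=\unigen/(\hecglob\otimes_{\heccomp}\finanni)\funig$, whereas $\annigen=\unigen/\annihilator\unigen$, and these are not a priori equal: the finite annihilator $\finanni$ and the $p$-adic annihilator $\annihilator$ are defined independently. The paper devotes its main technical lemma (\cref{finite annihilator lives in annihilator}) to proving the compatibility $\hecglob\otimes_{\heccomp}\finanni=\annihilator\otimes_{\heccomp}\hecfin$ via explicit Hecke-algebra calculations with the Iwahori--Weyl decomposition; this is what identifies the two quotients and closes the argument. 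Your sketch does not anticipate this obstruction.

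Finally, for the finite step itself the paper does not build an explicit resolution of $\fannig$ by summands of $\fourg$. Rather, it shows that both $\langle\fannig\rangle$ and $\langle\fourg\rangle$ contain every unipotent irreducible $D(\lambda)$: the former by the same Schur-algebra global-dimension argument, the latter by a downward induction on the dominance order using the Dipper--James composition-factor results for $\ind_{\finpara(\kappa)}^{\fingp}\triv$ (each irreducible $D(\kappa)$ appears once, with all other factors $D(\mu)$ for $\mu>\kappa$). This is more elementary than the two-sided complex you envisage.
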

In this paper, we prove this main theorem for arbitrary $n$ and $l$. The high-level method is similar to \cite{berry2024derived}. We first prove a finite version of the main theorem. Let $\resfield$ be the residue field of $\padicfield$, and let $\fingp=\glnf$. Analogously to the $p$-adic case, we may define $\finproj$, $\funig$, $\finanni$, $\fannig$, and $\fourg$.
\begin{theorem}[Finite Main Theorem]\label{finite main theorem}
$\adbmod{\hecfinuni}=\langle\fannig\rangle_{\adbmod{\hecfinuni}}=\langle\fourg\rangle_{\adbmod{\hecfinuni}}.$
\end{theorem}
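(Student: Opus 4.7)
My plan is to prove the two equalities separately.

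For the first equality $\adbmod{\hecfinuni}=\langle\fannig\rangle$, I would build on the finite analogue of Vignéras' result that $\fannig$ is a progenerator for $\anniblockfin$ with $\End(\fannig)=\finschur$, which immediately gives $\langle\fannig\rangle\supseteq\adbmod{\anniblockfin}$. To upgrade this to all of $\adbmod{\hecfinuni}$, I invoke the fact that some power $\finanni^N$ acts as zero on $\uniblockfin$. For a finitely generated $M$ in $\uniblockfin$, the finite filtration
\[
0=\finanni^N M\subseteq \finanni^{N-1}M\subseteq \cdots\subseteq \finanni M\subseteq M
\]
has successive quotients annihilated by $\finanni$, so each lies in $\anniblockfin\subset\langle\fannig\rangle$. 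Iterated cones over the short exact sequences $0\to\finanni^{k+1}M\to\finanni^k M\to \finanni^k M/\finanni^{k+1}M\to 0$ then place $M$ in $\langle\fannig\rangle$. A general bounded complex reduces to this case via the standard truncation triangles $\tau_{\le n}X\to X\to\tau_{>n}X\to$, used finitely many times.

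For the second equality $\langle\fourg\rangle=\langle\fannig\rangle$, the key input is that both objects have endomorphism algebra $\finschur$. I would construct an explicit morphism relating the two, most plausibly a canonical quotient from a suitable summand of $\fourg$ onto $\fannig$ modelled on the Gelfand–Graev relation $\annigen=\unigen/\annihilator\unigen$. By analysing the kernel of this quotient together with the remaining summands of $\fourg$ inductively — filtered, say, by the rank of the parahoric $\parhor$ appearing in $\ind_\parhor^\fingp\triv$ — I would aim to place $\fannig$ inside $\langle\fourg\rangle$ and conversely each summand $\ind_\parhor^\fingp\triv$ inside $\langle\fannig\rangle$. A complementary approach, if the direct analysis proves awkward, is to decompose $\finschur$ via an idempotent that realises the overlap of $\fourg$ and $\fannig$ as $\finschur$-modules, then handle the complementary parts separately.

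The principal obstacle is unquestionably this second equality. Although $\fourg$ and $\fannig$ share an endomorphism algebra, they are built from very different constructions: $\fannig$ arises from Gelfand–Graev representations, while $\fourg$ is a sum of parabolically induced trivial characters. Producing explicit triangles relating them — or, equivalently, identifying the precise obstruction to a naive Morita equivalence $\fourg\sim\fannig$ — will require detailed combinatorial information about how the ideal $\finanni$ interacts with the permutation modules $\ind_\parhor^\fingp\triv$, likely drawing on Howlett–Lehrer-style descriptions of parabolic Hecke algebras and their idempotents inside $\finschur$.
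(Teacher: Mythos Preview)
Your plan for the first equality has a gap: a progenerator $\fannig$ for $\anniblockfin$ gives only $\langle\fannig\rangle=\per(\anniblockfin)$, not $\adbmod{\anniblockfin}$. The missing ingredient is that $\finschur$ has finite global dimension (it is quasi-hereditary by Cline--Parshall--Scott), which forces $\per(\anniblockfin)=\adbmod{\anniblockfin}$. Once you add this, your filtration argument works, though it is unnecessary in the finite setting: every unipotent simple is a subquotient of $\finproj$ and is therefore already annihilated by $\finanni$, so the simples $\mathscr{D}$ lie in $\anniblockfin$ outright. Since $\adbmod{\hecfinuni}=\langle\mathscr{D}\rangle$ (finite length), the inclusion $\mathscr{D}\subseteq\adbmod{\anniblockfin}=\langle\fannig\rangle$ finishes the first equality without any nilpotence filtration.

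Your strategy for the second equality is where the proposal really goes astray. The paper does \emph{not} attempt to relate $\fourg$ and $\fannig$ to one another at all; no explicit map, no idempotent splitting, no comparison of the two as $\finschur$-modules. Instead it shows directly that $\mathscr{D}\subseteq\langle\fourg\rangle$, via Dipper--James theory: the permutation module $\ind_{\finpara(\kappa)}^{\fingp}\triv$ (a summand of $\fourg$) has composition factors $D(\mu)$ only for $\mu\geq\kappa$ in the dominance order, with $D(\kappa)$ appearing exactly once. A decreasing induction on dominance then places each $D(\kappa)$ in $\langle\fourg\rangle$. The two equalities are thus proved \emph{independently}, both by reducing to the common target $\mathscr{D}$; there is no need for the delicate Gelfand--Graev versus permutation-module comparison you anticipate, and the Howlett--Lehrer machinery you mention plays no role. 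The key idea you are missing is precisely this unitriangular composition structure of the $\ind_{\finpara(\kappa)}^{\fingp}\triv$ with respect to dominance.
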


Note that \cref{finite main theorem} is a new result for the representation theory of $\fingp$, and provides analogous applications in the finite setting to the $p$-adic applications of \cref{main theorem}. 

The first equality of \cref{main theorem} is proven in the same way as in \cite{berry2024derived} using the following finiteness results, which were already established therein in the necessarily generality.
\begin{proposition*}[\cref{everything is noetherian,schur algebra finite global dimension}] We have that
    \begin{enumerate}
        \item $\smrep$ is Noetherian.
        \item $\schglnco$ has finite global dimension.
    \end{enumerate}
\end{proposition*}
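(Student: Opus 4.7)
The plan is to handle each part separately, reducing in both cases to the algebraic structure of $\schglnco$.

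For (1), I would first invoke the (modular) block decomposition of $\smrep$ to reduce Noetherian-ness to a check on each individual block. By \cite{chinello2018blocks,dat2018equivalences}, every block of $\smrep$ is equivalent to the unipotent block of some product of general linear groups over finite extensions of $\padicfield$, so it suffices to handle $\uniblock$. Using the fact from \cite{vigneras2003schur} that $\annihilator^N \uniblock = 0$ for some $N$, every object of $\uniblock$ admits a finite filtration whose successive quotients lie in $\anniblock$. Since $\anniblock$ is equivalent to the category of modules over $\schglnco$, the claim would follow from Noetherian-ness of $\schglnco$ itself, which can be established directly from its concrete description via the parahorics in $\phs$ together with the Noetherian-ness of the extended affine Hecke algebra $\hecglnco$.

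For (2), the plan would be to exhibit $\schglnco$ with a quasi-hereditary (or standardly stratified) structure, with standard modules provided by parabolic induction along a fixed chain of parabolics indexed by $\phs$. The heredity chain would be indexed by partition-type combinatorial data, and then a standard argument on such algebras bounds the global dimension. Alternatively, one could construct explicit projective resolutions of simple $\schglnco$-modules of uniformly bounded length by iterating parabolic induction--restriction adjunctions and tracking how simples are built from induced pieces.

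The main obstacle is expected to be part (2). The classical finite Schur algebra admits a quasi-hereditary structure via the dominance order on partitions, but the $p$-adic analogue is infinite-dimensional and incorporates an extra affine Hecke-theoretic direction; controlling projective resolutions across this combined structure, and extracting a uniform bound on the global dimension, will likely be the technical heart of the argument. In contrast, part (1) is essentially a bookkeeping exercise once the block decomposition and Vignéras's structural results are in hand.
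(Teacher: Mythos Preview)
The paper does not prove either part from scratch: for (1) it simply cites Dat's finiteness theorems \cite{dat2009finitude}, and for (2) it cites Theorem~3.14 of the author's earlier paper \cite{berry2024derived} together with a Morita equivalence from \cite{vigneras2003schur}. Your strategies diverge from both, and for (1) there is a real gap.

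Your route for (1) runs logically backwards relative to the paper. You want to deduce that $\uniblock$ is Noetherian from Noetherian-ness of $\schglnco$ via the filtration $M\supseteq\annihilator M\supseteq\cdots\supseteq\annihilator^N M=0$. But that filtration argument only works if each successive quotient $\annihilator^i M/\annihilator^{i+1} M$ is a \emph{finitely generated} $\annihec$-module whenever $M$ is finitely generated, and for this you need $\annihilator_1$ to be a finitely generated ideal of $\unihec$. In the paper that last fact is recorded explicitly as a \emph{consequence} of $\smrep$ being Noetherian, not an input to it. Your argument is therefore circular unless you supply an independent proof that $\annihilator_1$ is finitely generated, which you do not. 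Describing (1) as ``essentially a bookkeeping exercise'' also undersells it: Dat's theorem is a substantial result about smooth $\coeffs$-representations of reductive $p$-adic groups and is not a formal corollary of the block decomposition or of Vign\'eras's structural results.

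For (2), your quasi-hereditary outline is reasonable in spirit---the paper invokes exactly this mechanism, via \cite{cline1990integral}, for the \emph{finite} Schur algebra $\finschur$---and you correctly flag the affine direction as the obstruction. But the proposal stops at naming the difficulty rather than resolving it, so as written it is a plausible sketch, not a proof.
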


The second equality of \cref{main theorem} is proven by lifting \cref{finite main theorem} to the $p$-adic setting via parahoric induction, which is exact and so preserves classical generators, and also preserves and reflects unipotent representations. While parahoric induction manifestly sends $\fourg$ to $\ourgen$, to show that it sends $\fannig$ to $\annigen$ we must show a further lemma connecting $\finanni$ and $\annihilator$.
\begin{lemma*}[\cref{finite annihilator lives in annihilator}]
$\hecglob\otimes_{\heccomp}\finanni=\annihilator\otimes_{\heccomp}\hecfin$
\end{lemma*}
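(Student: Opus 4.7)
The plan is to reinterpret both sides of the claimed identity as sub-bimodules of the $\hecglob$-$\hecfin$-bimodule $B:=\hecglob\otimes_{\heccomp}\hecfin$, which represents parahoric induction in the sense that $\parind V\cong B\otimes_{\hecfin} V$ for every $\hecfin$-module $V$. In particular, $\parind\finproj=B\otimes_{\hecfin}\finproj\cong\project$, since the fact that $\iwa$ is the preimage of $\finbor$ under $\maxcomp\twoheadrightarrow\fingp$ yields $\ind_{\iwa}^{\padicgp}\triv\cong\ind_{\maxcomp}^{\padicgp}\infl\ind_{\finbor}^{\fingp}\triv$.

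Under this identification, the left-hand side becomes $B\cdot\finanni\subseteq B$, the image of the right $\hecfin$-action of $\finanni$, while the right-hand side becomes $\annihilator\cdot B\subseteq B$, the image of the left $\hecglob$-action of $\annihilator$. The lemma is thus equivalent to the assertion $B\cdot\finanni=\annihilator\cdot B$, which in turn is equivalent to showing that the natural comparison map
\[
\parind(\hecfin/\finanni)=B/(B\cdot\finanni)\longrightarrow B/(\annihilator\cdot B)=(\hecglob/\annihilator)\otimes_{\heccomp}\hecfin
\]
between the two canonical quotients of $B$ is an isomorphism of $\hecglob$-$\hecfin$-bimodules.

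The inclusion $\annihilator\cdot B\subseteq B\cdot\finanni$ is the more accessible direction: since $\annihilator$ acts as zero on $\project=\parind\finproj$, tracing this vanishing through the bimodule structure forces every element of $\annihilator\otimes_{\heccomp}\hecfin$ to lie in the kernel of the surjection $B\twoheadrightarrow\parind(\hecfin/\finanni)$. The reverse inclusion is more delicate and morally reduces to showing that $\annihilator\cap\heccomp$ surjects onto $\finanni$ under the natural map $\heccomp\twoheadrightarrow\hecfin$.

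I expect this surjectivity to be the main obstacle. The cleanest route should be to decompose $\project|_{\maxcomp}$ along the double cosets $\maxcomp\backslash\padicgp/\iwa$ via Mackey: the $\propmax$-invariants of $\project$ contain a distinguished copy of $\finproj$, so any element of $\heccomp$ whose image in $\hecfin$ kills $\finproj$ must annihilate this copy, and a density argument using that this copy generates $\project$ as a $\hecglob$-module pins it into $\annihilator\cap\heccomp$; conversely, any element of $\annihilator\cap\heccomp$ annihilates the distinguished copy of $\finproj$ and hence pushes to $\finanni$.
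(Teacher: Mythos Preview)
Your bimodule reframing is correct, and your argument for the inclusion $\annihilator\cdot B\subseteq B\cdot\finanni$ is actually cleaner than the paper's: since $\finanni=\Ann(\finproj)$ and $\finproj$ is finitely generated, $\hecfin/\finanni$ embeds into a finite power of $\finproj$; exactness of parahoric induction then embeds $\parind(\hecfin/\finanni)$ into a power of $\project$, which $\annihilator$ kills. The paper instead does this direction by an explicit decomposition $Z\indicate_{\propmax}=\sum_{g\in\padicgp/\maxcomp}\indicate_{g\propmax}Z_g$ and a disjoint-support argument to force each $Z_g\in\finanni$.

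The gap is in the other inclusion. Your reduction to surjectivity of $\annihilator\cap\heccomp\to\finanni$ is fine (and, via the splitting $\hecfin\hookrightarrow\heccomp$, is equivalent to the paper's reduction to $\finanni\subseteq\annihilator$). But your proposed ``density argument'' does not work: if $a\in\heccomp$ kills the distinguished copy of $\finproj$ inside $\project$, the fact that this copy generates $\project$ as an $\hecglob$-module tells you nothing, because $a$ does not commute with $\hecglob$. You cannot propagate the vanishing from $\finproj$ to $\hecglob\cdot\finproj$. What must actually be shown is that $a\in\finanni$ (viewed as a bi-$\propmax$-invariant element of $\heccomp$) kills \emph{every} Mackey piece of $\project|_{\maxcomp}$, not just the identity piece. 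Since such an $a$ acts through $\propmax$-invariants, this amounts to identifying $(\ind_{\maxcomp\cap g\iwa g^{-1}}^{\maxcomp}\triv)^{\propmax}$ for each double coset $g$. The paper handles this (in a slightly different but equivalent guise) by taking $g=w_0$ a minimal-length representative in $\finweyl\backslash\weyl$ and invoking a result of Morris to show that $\propmax(\maxcomp\cap w_0\iwa w_0^{-1})$ is a standard parahoric containing $\iwa$; equivalently, $\iwa w_0\iwa=\propmax w_0\iwa$. This forces the $\propmax$-invariants of each Mackey piece to be of the form $\ind_{\finpara}^{\fingp}\triv$ for some standard parabolic $\finpara$, hence a submodule of $\finproj$, hence annihilated by $\finanni$. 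Without this structural input on the non-identity Mackey pieces, your sketch does not close.
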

A difference in our method compared to \cite{berry2024derived} is that we show equality here instead of just left-to-right containment, meaning we no longer need to show that $\fannig$ is a direct sum of subrepresentations of $\finproj$.

However, while the sketch so far has been identical to the methods of \cite{berry2024derived} except as noted, the proofs of \cref{finite main theorem} and \cref{finite annihilator lives in annihilator} are quite different to the ones found therein for $n=2$ and $l$ odd dividing $q+1$. For \cref{finite annihilator lives in annihilator}, we make use of general properties of the Iwahori-weyl group and global Hecke algebras to avoid needing to do explicit calculations with the elements of $\finanni$, which in turn allows us to avoid the elaborate task of describing those elements explicitly.

The proof of \cref{finite main theorem} can no longer use the explicit submodule lattices for the projective indecomposable representations we had in the case of \cite{berry2024derived}, as these were obtained via cyclic defect theory and not all cases we consider have cyclic defect. Instead, we use the Dipper-James construction of irreducible $l$-modular representations of finite $\gln$. While Dipper-James theory doesn't completely describe the projective indecomposable representations, it does provide decomposition numbers, along with some further partial results. Together with the work of \cite{takeuchi1996group}, which is a finite analogue of and inspiration for \cite{vigneras2003schur}, the theory gives sufficient information about the composition series of the projective indecomposable representations to show our claim.

As in \cite{berry2024derived}, we may use \cref{main theorem} to give a derived Morita equivalence for the derived $l$-modular unipotent block. Let $\gencx$ be a projective resolution of $\ourgen$ in $\smrep$, write $\dgend$ for the dg endomorphism algebra of a complex, and write $\per$ for the perfect complexes over a dg algebra.

\begin{corollary*}[\cref{derived equivalence}]
    Let $\gencx$ be a projective resolution of $\ourgen$ in $\smrep$. There is a triangulated equivalence $\adbmod{\unihec}\simeq\per(\dgend_{\padicgp}(\gencx))$.
\end{corollary*}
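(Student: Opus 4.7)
The plan is to apply the standard dg-Morita principle (in the spirit of Keller) to the classical generator $\ourgen$ of $\adbmod{\unihec}$ provided by \cref{main theorem}. The category $\adbmod{\unihec}$ is the bounded derived category of a module category, so it carries a canonical dg enhancement in which dg-Hom complexes can be computed, which is what makes the Morita argument available.

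Concretely, I would set $A=\dgend_{\padicgp}(\gencx)$ and construct a triangulated functor
$$F:\adbmod{\unihec}\to\derivedcat(A),\qquad F(M)=\dghom_{\padicgp}(\gencx,\cx{P}_M),$$
where $\cx{P}_M$ is a projective resolution of $M$ and the image carries its natural right dg-$A$-module structure. Since $\adbmod{\unihec}$ consists of bounded complexes and $\smrep$ has enough projectives, such bounded-above resolutions exist, are K-projective, and make $F$ well defined independently of choices.

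The key computation is that $F(\ourgen)=A$ as a free rank-one dg module, so
$$\Hom_{\derivedcat(A)}(F(\ourgen),F(\ourgen)[n])=H^n(A)=\operatorname{Ext}^n_{\padicgp}(\ourgen,\ourgen)=\Hom_{\adbmod{\unihec}}(\ourgen,\ourgen[n])$$
for all $n\in\mathbb{Z}$, and $F$ realises this as the canonical isomorphism. Because $F$ is triangulated and preserves direct summands, full faithfulness propagates by a standard d\'evissage along cones, shifts and retracts from $\langle\ourgen\rangle$ to all of $\adbmod{\unihec}=\langle\ourgen\rangle_{\adbmod{\unihec}}$.

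Finally, since $A$ classically generates $\per(A)$ and $F$ sends $\ourgen$ to $A$, the essential image of $F$ is a thick triangulated subcategory of $\derivedcat(A)$ containing $A$ and closed under summands, hence equals $\per(A)$; this simultaneously gives essential surjectivity and confirms that $F$ actually lands in $\per(A)$. The only delicate point is this last containment, which requires a classical generator rather than merely a compact one; this is exactly what \cref{main theorem} delivers, since every object of $\adbmod{\unihec}$ is built from $\ourgen$ by finitely many cones, shifts and retracts, so its image is perfect by construction.
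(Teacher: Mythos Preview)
Your proposal is correct and follows the same approach as the paper: both invoke the Keller-style dg-Morita principle applied to the classical generator $\ourgen$ furnished by \cref{main theorem}. The only difference is that the paper packages this principle as the black-box \cref{triangulated equivalence} (itself quoted from Theorem~6.4 of \cite{berry2024derived}) and applies it in one line, whereas you have unpacked and sketched the content of that proposition directly.
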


The dg algebra $\dgend_{\padicgp}(\gencx)$ is a dg enriched version of the Schur algebra $\schglnco$. In particular, the latter is the zeroth cohomology of the former. We hope in future to obtain a sufficiently explicit description of this dg Schur algebra, or its perfect complexes, which would provide an $l$-modular analogue at the derived level for the description of the complex unipotent block via the well-understood representation theory of affine Hecke algebras of type $A$.

Furthermore, one might hope to extend the results of \cite{vigneras2003schur} and this paper to other blocks and other groups. It is known (\cite{minguez2014representations,kurinczuk2020cuspidal,fintzen2022tame}) that the various types discussed above can still be constructed with $l$-modular coefficients, though the representations associated to them are no longer progenerators. One might hope that these provide subcategories analogous to $\anniblock$, constructed via the annihilators of these representations, with progenerators whose endomorphisms are in some sense twisted affine Schur algebras and which classically generate the derived category of the block.

\subsection{Acknowledgements}

I would like to thank the Engineering and Physical Sciences Research Council, grant T00046, for funding my PhD studentship and thus enabling me to do this research, as well as grant EP/V061739/1 for funding the writing of the paper. I would also like to thank my PhD supervisors, Professors Shaun Stevens and Vanessa Miemietz, for their constant guidance and support. 

\section{Derived Categories and Classical Generators}

Throughout, let $\coeffs$ be an algebraically closed field with $\charstc(R)=l$.

\begin{definition}
    A dg $\coeffs$-algebra is an $\coeffs$-complex $(B,d)$ whose underlying graded $\coeffs$-module is a graded algebra over $\coeffs$, satisfying the graded Leibniz rule
    \[
    d(fg)=d(f)g+(-1)^nfd(g)
    \]
    for all $f$ of degree $n$.
\end{definition}

Let $A$ be an ordinary (ie non-dg) $\coeffs$-algebra. We may then think of $A$ as a dg algebra with all elements having degree $0$. We will consider dg algebras arising in the following way:

\begin{definition}
    Let $(\cx{M},d')$ be a complex of $A$-modules. The dg endomorphism algebra $\dgend_A(\cx{M})$ is the dg algebra whose $\coeffs$-complex has in degree $n$ the graded $A$-module homomorphisms $f:\cx{M}\rightarrow \cx{M}$ of degree $n$, with differential
    \[
        df:=d'f-(-1)^nfd'
    \]
    for all $f:\cx{M}\rightarrow \cx{M}$ of degree $n$, and with multiplication given by componentwise composition.
\end{definition}

Let $B$ be a dg $\coeffs$-algebra with differential $d$.

\begin{definition}
A dg $B$-module is an $\coeffs$-complex $(\cx{M},d')$ whose underlying graded $R$-module is a graded $B$-module, such that
    \[
    d'(fv)=(df)v+(-1)^nf(d'v)
    \]
    for all $f\in B$ of degree $n$ and all $v\in \cx{M}$.

A morphism of dg $B$-modules is a morphism of $\coeffs$-complexes that is also a morphism of graded $B$-modules.

Write $\adMod{B}$ for the (unbounded) derived category of $B$, the localisation of the category of dg $B$-modules at quasi-isomorphisms.
\end{definition}

$\adMod{B}$ is a triangulated category whose distinguished triangles are short exact sequences of dg $B$-modules which are split as graded $B$-modules.

\begin{definition}
    We say a set of objects $G$ of a triangulated category $T$ classically generates a triangulated subcategory $T'$ of $T$ if $T'$ is the smallest full triangulated subcategory of $T$ closed under isomorphisms and direct summands and containing $G$. We write this as $T'=\langle G\rangle_T$.

    The triangulated category $\per(B)$ of perfect objects in $\adMod{B}$ is $\langle B\rangle_{\adMod{B}}$.
\end{definition}

Observe that, in the case that $B$ is an ordinary algebra $A$, we have that $\per(A)$ is the full subcategory of $\adMod{A}$ consisting of objects isomorphic to finite length complexes of finitely generated projective $A$-modules.

\begin{definition}
We write $\adbmod{A}$ for the subcategory of $\adMod{A}$ consisting of objects isomorphic to finite length complexes of finitely generated $A$-modules. 
\end{definition}
This is a triangulated subcategory of $\adMod{A}$ that is closed under direct summands in $\adMod{A}$.

The following proposition is implicitly used in Theorem 6.8 of \cite{berry2024derived}, but for completeness we give a proof.

\begin{proposition}\label{progenerators are classical generators}
    Suppose $M$ is a progenerator of $\Mod(A)$. Then $M$ classically generates $\per(A)$.
\end{proposition}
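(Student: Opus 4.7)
The plan is to verify the two inclusions $\langle M\rangle_{\adMod{A}}\subseteq \per(A)$ and $\per(A)\subseteq \langle M\rangle_{\adMod{A}}$ separately, reducing the derived statement to the classical Morita-theoretic observation that $M$ and $A$ are each isomorphic to a direct summand of a finite direct sum of the other.

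First I would handle $\langle M\rangle_{\adMod{A}}\subseteq \per(A)$. Since $M$ is finitely generated projective, it is a direct summand of $A^n$ for some $n$, so $M\in\langle A\rangle_{\adMod{A}}=\per(A)$. Because $\per(A)$ is a full triangulated subcategory of $\adMod{A}$ closed under direct summands, and $\langle M\rangle_{\adMod{A}}$ is by definition the smallest such subcategory containing $M$, the containment follows.

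For the reverse inclusion $\per(A)\subseteq \langle M\rangle_{\adMod{A}}$, using $\per(A)=\langle A\rangle_{\adMod{A}}$ it suffices to show $A\in\langle M\rangle_{\adMod{A}}$. This is where the generator property enters: since $M$ generates $\Mod(A)$, the evaluation map from $M^{(\Hom_A(M,A))}$ to $A$ is a surjection. As $A$ is a cyclic, hence finitely generated, $A$-module, I can factor this through a finite subsum, obtaining a surjection $M^n\twoheadrightarrow A$. Projectivity of $A$ then splits this surjection, so $A$ is a direct summand of $M^n$, placing it in $\langle M\rangle_{\adMod{A}}$.

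The step most worth care is the reduction to a \emph{finite} direct sum in the generator argument; this is the only place the progenerator hypothesis (as opposed to just projectivity of $M$) gets used, and it hinges on $A$ being finitely generated over itself. Everything else is formal manipulation of the defining closure properties of the classical generation operator. No obstacle involving the dg or triangulated structure arises, since the whole argument takes place among objects concentrated in degree zero and uses only direct sums and direct summands.
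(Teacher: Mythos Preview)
Your proof is correct and follows essentially the same route as the paper: both arguments show $M\in\per(A)$ because a progenerator is finitely generated projective, and then show $A\in\langle M\rangle$ by using the generator property to get a surjection from copies of $M$ onto $A$, cutting down to a finite sum via finite generation of $A$, and splitting via projectivity of $A$. The only cosmetic difference is that the paper works inside $\per(A)$ rather than $\adMod{A}$ for the ambient category, which is immaterial since $\per(A)$ is already thick in $\adMod{A}$.
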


\begin{proof}
    As finitely generated projective modules are precisely the direct summands of finite direct sums of $A$, we have $\per(A)=\langle A\rangle_{\per(A)}$. Furthermore, as a progenerator is finitely generated and projective, we have $M\in\per(A)$. It thus suffices to show that $A\in\langle M\rangle_{\per(A)}$. But as $M$ is a generator, $A$ is the quotient of a direct sum of copies of $M$. As $A$ is finitely generated, this direct sum may be taken to be finite, and as $A$ is projective, the quotient splits, so $A$ is a direct summand of a finite direct sum of copies of $M$.
\end{proof}

\begin{proposition}\label{triangulated equivalence}
    Let $\mathcal{T}$ be a full triangulated subcategory of $\adMod{A}$ that is closed under direct summands, let $M$ be an object in both $\Mod(A)$ and $\mathcal{T}$, such that $\langle M\rangle _{\mathcal{T}}=\mathcal{T}$, and let $\cx{M}$ be a projective resolution of $M$ in $\Mod(A)$. Then there is a triangulated equivalence
    \[
    \mathcal{T}\simeq\per(\dgend_A(\cx{M})).
    \]
\end{proposition}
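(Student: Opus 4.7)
The plan is to realise the equivalence via a dg-Hom functor, in the style of Keller's derived Morita theory. Write $B := \dgend_A(\cx{M})$ and define $F \colon \adMod{A} \to \adMod{B}$ by $F(\cx{N}) := \dghom_A(\cx{M}, \cx{N})$, with its natural $B$-module structure given by precomposition with endomorphisms of $\cx{M}$. Because $\cx{M}$ is a bounded above complex of projective $A$-modules, $F$ preserves quasi-isomorphisms in $\cx{N}$ and hence descends to a triangulated functor on derived categories.

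Tautologically $F(\cx{M}) \cong B$ as dg $B$-modules. Since $\cx{M}$ is quasi-isomorphic to $M$ and $\mathcal{T}$ is closed under isomorphisms in $\adMod{A}$, we have $\langle \cx{M} \rangle_{\mathcal{T}} = \langle M \rangle_{\mathcal{T}} = \mathcal{T}$. As $F$ is triangulated and respects direct summands, it therefore restricts to a functor $F|_{\mathcal{T}}\colon \mathcal{T} \to \langle B \rangle_{\adMod{B}} = \per(B)$.

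The crux is full faithfulness. On the generator $\cx{M}$ itself we have, for each $n \in \mathbb{Z}$, the chain
\[
\Hom_{\adMod{A}}(\cx{M}[n], \cx{M}) = H^n(\dghom_A(\cx{M}, \cx{M})) = H^n(B) = \Hom_{\adMod{B}}(B[n], B),
\]
where the first identification uses that $\cx{M}$ is h-projective (as a bounded above complex of projectives), and the map induced by $F$ is compatible with this chain. A standard dévissage then extends this to all of $\mathcal{T}$: the subcategory of $X\in\mathcal{T}$ for which $\Hom_{\mathcal{T}}(\cx{M}[n],X)\to\Hom_{\adMod{B}}(B[n],F(X))$ is an isomorphism for all $n$ is a triangulated subcategory closed under direct summands containing $\cx{M}$, hence equals $\mathcal{T}$ by the generation hypothesis; the analogous dévissage in the first variable, holding a general $Y \in \mathcal{T}$ fixed, completes full faithfulness.

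Essential surjectivity is then formal: the essential image of $F|_{\mathcal{T}}$ is a triangulated subcategory of $\per(B)$ closed under direct summands and containing $B$, hence equals $\per(B)$. The main technical hurdle is the first identification in the displayed chain, which requires that $\dghom_A(\cx{M},-)$ compute derived Hom on $\adMod{A}$; this rests precisely on $\cx{M}$ being a projective resolution of $M$. Once this foundational point is in place, the remainder of the argument is a formal exercise in triangulated category theory.
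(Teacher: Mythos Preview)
Your argument is correct; it is precisely the standard Keller-style derived Morita proof via the functor $\dghom_A(\cx{M},-)$ together with d\'evissage on the classical generator. The paper does not argue this proposition directly but simply cites Theorem~6.4 of \cite{berry2024derived}, whose proof (ultimately going back to Keller) follows exactly this pattern, so your proposal coincides in substance with what the citation supplies.
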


\begin{proof}
    This is Theorem 6.4 of \cite{berry2024derived}.
\end{proof}

We now generalise Lemma 6.6 of \cite{berry2024derived} and its proof to a more abstract setting.

\begin{proposition}
    If $A$ is noetherian and of finite global dimension, then every finitely generated $A$-module $M$ has a finite length finitely generated projective resolution.
\end{proposition}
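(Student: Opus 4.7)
The plan is to build a projective resolution one step at a time, using Noetherianness to keep everything finitely generated, and then to invoke the finite global dimension hypothesis to show that the process terminates after finitely many steps.

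First, since $M$ is finitely generated, I pick a surjection $P_0 \twoheadrightarrow M$ from a finitely generated free (hence projective) $A$-module. Its kernel $K_0$ is a submodule of a finitely generated module over a Noetherian ring $A$, hence itself finitely generated. Iterating, for each $i \geq 1$ I choose a surjection $P_i \twoheadrightarrow K_{i-1}$ from a finitely generated free module, and set $K_i := \ker(P_i \to P_{i-1})$ (or $\ker(P_0 \to M)$ when $i=0$); by Noetherianness each $K_i$ is finitely generated, so the construction continues. This gives an exact sequence
\[
\cdots \to P_{i+1} \to P_i \to \cdots \to P_0 \to M \to 0
\]
with each $P_i$ finitely generated projective.

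Let $d := \gldim A < \infty$. The standard syzygy characterisation of projective dimension says that $\mathrm{pd}_A(M) \leq d$ is equivalent to the $d$-th syzygy $K_{d-1}$ being projective, for any choice of partial projective resolution of length $d$; this follows from Schanuel's lemma, which guarantees that the isomorphism class of $K_{d-1}$ modulo projective summands is an invariant of $M$, so if some resolution of length at most $d$ exists (which it does by the global dimension bound), then in our resolution $K_{d-1}$ must already be projective. Since $K_{d-1}$ is also finitely generated by the first step, $P_d := K_{d-1}$ is a finitely generated projective module.

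Truncating therefore produces the finite length finitely generated projective resolution
\[
0 \to P_d \to P_{d-1} \to \cdots \to P_0 \to M \to 0,
\]
as required. The only subtle point is the invocation of Schanuel's lemma to conclude that the $d$-th syzygy in \emph{our} chosen resolution is projective, rather than merely in some abstract resolution of length $\leq d$; everything else is a routine bookkeeping argument using Noetherianness.
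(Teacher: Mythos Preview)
Your proof is correct and follows essentially the same approach as the paper: use Noetherianness to construct a (possibly infinite) finitely generated projective resolution, then truncate at the global dimension by observing that the relevant syzygy is projective. The paper outsources both steps to Rotman (Lemma 7.19 and Proposition 8.6), whereas you spell out the construction and invoke Schanuel's lemma directly, but the underlying argument is the same.
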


\begin{proof}
    As $A$ is noetherian and $M$ is finitely generated we know by \cite{rotman2009homological} Lemma 7.19 that $M$ has a finitely generated projective resolution. But as $A$ has finite global dimension, say $n$, replacing the $n$-th term with the $(n-1)$th syzygy gives, by \cite{rotman2009homological} Proposition 8.6, a finitely generated projective resolution of length $n$.
\end{proof}

\begin{proposition}
    If $\cx{M}$ is a finite length complex of $A$-modules, and each $M^i$ has a finite length finitely generated projective resolution, then $\cx{M}$ is quasi-isomorphic to a finite length complex of finitely generated projective modules.
\end{proposition}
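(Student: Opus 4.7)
The plan is to induct on the number of nonzero terms of $\cx{M}$, with the base case $\cx{M}=M^i[-i]$ handled directly by the hypothesis on $M^i$. For the inductive step, let $b$ be the largest degree in which $\cx{M}$ is nonzero, and consider the stupid truncation
\[
0\to \sigma^{<b}\cx{M}\to \cx{M}\to M^b[-b]\to 0,
\]
which is split in each degree and hence yields a distinguished triangle in $\adMod{A}$. The inductive hypothesis gives a finite length complex $\cx{P}$ of finitely generated projectives quasi-isomorphic to $\sigma^{<b}\cx{M}$, and the assumption on $M^b$ supplies a finite length finitely generated projective resolution $\cx{Q}[-b]$ of $M^b[-b]$.

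Replacing the outer terms by their projective analogues, the distinguished triangle becomes one of the form $\cx{P}\to \cx{M}\to \cx{Q}[-b]\to \cx{P}[1]$ in $\adMod{A}$, whose connecting morphism $\cx{Q}[-b]\to \cx{P}[1]$ is \emph{a priori} only a morphism in the derived category. Since $\cx{Q}[-b]$ is a bounded complex of projective modules and hence K-projective, this derived-category morphism is represented by an honest chain map. The mapping cone of this chain map (up to a shift) is then a finite length complex of finitely generated projective modules fitting into the same distinguished triangle, and so is quasi-isomorphic to $\cx{M}$.

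The main obstacle is the realisation of the connecting morphism as an actual chain map between the chosen projective complexes; this rests on the standard K-projectivity of bounded projective complexes, which ensures that morphisms in $\adMod{A}$ out of $\cx{Q}[-b]$ agree with homotopy classes of chain maps. The rest is a routine application of the triangulated structure of $\adMod{A}$.
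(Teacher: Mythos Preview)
Your argument is sound, with one small orientation slip: with cohomological differentials and $b$ the top nonzero degree, $M^b[-b]$ is the subcomplex of $\cx{M}$ and $\sigma^{<b}\cx{M}$ the quotient, so the short exact sequence runs the other way (alternatively, truncate at the bottom degree instead). This is harmless --- you still obtain a distinguished triangle on the same three objects, and K-projectivity of the relevant bounded complex of projectives lets you realise the connecting morphism as an honest chain map, whence the shifted cone furnishes the required finite projective complex.

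The paper takes a different, one-shot route. It fixes a finite finitely generated projective resolution $P^{i\bullet}$ of each $M^i$, forms the total complex $\cx{T}$ with $T^k=\bigoplus_{i+j=k}P^{ij}$, and cites Gelfand--Manin (Lemma~III.7.12) for the quasi-isomorphism $\cx{M}\simeq\cx{T}$; finiteness of $\cx{T}$ is then immediate from that of $\cx{M}$ and of each $P^{i\bullet}$. Your inductive mapping-cone argument is essentially this total-complex construction unwound one column at a time. It has the virtue of being self-contained, needing no external lemma on double complexes, whereas the paper's version is shorter and sidesteps any discussion of K-projectivity or of lifting morphisms out of the derived category.
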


\begin{proof}
    For each $i$, write $P^{i\bullet}$ for a choice of finite length finitely generated projective resolution of $M^i$.

    By \cite{gelfand2003homological} Lemma III.7.12, $\cx{M}$ is quasi-isomorphic to the complex $\cx{T}$ whose terms are $T^k=\oplus_{i+j=k}P^{ij}$. As $\cx{M}$ has finite length, each $T^k$ is a finite direct sum of finitely generated projective modules, and hence is finitely generated and projective. Furthermore, as $\cx{M}$ and all of the $P^{i\bullet}$ have finite length, $\cx{T}$ also has finite length.
\end{proof}

\begin{corollary}\label{perfect is finitely generated}
    If $A$ is noetherian and of finite global dimension, then $\per(A)=\adbmod{A}$.
\end{corollary}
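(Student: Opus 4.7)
The plan is to establish the two inclusions $\per(A)\subseteq\adbmod{A}$ and $\adbmod{A}\subseteq\per(A)$ separately, using the two preceding propositions to handle the harder direction.

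For the inclusion $\per(A)\subseteq\adbmod{A}$, I would note that $A$ itself, viewed as a complex concentrated in degree $0$, is a finitely generated $A$-module and hence lies in $\adbmod{A}$. Since $\adbmod{A}$ is a triangulated subcategory of $\adMod{A}$ closed under direct summands (as remarked just before \cref{progenerators are classical generators} was stated), and $\per(A)=\langle A\rangle_{\adMod{A}}$ is by definition the smallest such subcategory containing $A$, we get $\per(A)\subseteq\adbmod{A}$ for free.

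For the reverse inclusion, I would take an arbitrary object of $\adbmod{A}$, which by definition may be represented by a finite length complex $\cx{M}$ of finitely generated $A$-modules. Applying the first of the two preceding propositions (which requires exactly the noetherian plus finite global dimension hypothesis), each term $M^i$ admits a finite length finitely generated projective resolution. The second preceding proposition then lets us replace $\cx{M}$ by a quasi-isomorphic finite length complex $\cx{T}$ of finitely generated projective $A$-modules. It remains to observe that such a $\cx{T}$ lies in $\per(A)$: each $T^i$ is a direct summand of a finite direct sum of copies of $A$, hence lies in $\langle A\rangle_{\adMod{A}}$, and a bounded complex of such objects is built up by finitely many shifts and mapping cones from its terms, so sits in the triangulated hull $\langle A\rangle_{\adMod{A}}=\per(A)$.

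There is no real obstacle here, since the two preceding propositions have already packaged all the homological content; this corollary is essentially the statement that those two results combine cleanly. The only minor point to make carefully is the final sentence of the previous paragraph, namely that a bounded complex of finitely generated projective modules is perfect, which is a standard induction on length using the brutal truncation triangles.
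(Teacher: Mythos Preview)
Your proposal is correct and matches the paper's intended argument: the corollary is stated without proof, as it follows immediately from combining the two preceding propositions together with the earlier observation that $\per(A)$ consists precisely of objects isomorphic to finite length complexes of finitely generated projective $A$-modules. Your write-up simply makes explicit the two inclusions and the brutal-truncation argument that the paper left implicit.
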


\section{The Derived \texorpdfstring{$l$}{l}-Modular Unipotent Block of Finite \texorpdfstring{$\GL_n$}{GLn}}

Let $\resfield$ be a finite field of characteristic $p\neq l=\charstc(\coeffs)$ and cardinality $q$, and let $\redgp=\gln$. Write $\fingp=\redgp(\resfield)$ for the $\resfield$-points of $\redgp$.

We fix in $\fingp$ a choice of minimal parabolic subgroup $\finbor$ to be the upper triangular matrices. We write $\finuni$ for the unipotent radical of $\finbor$, that is, the unipotent upper triangular matrices. We fix also a choice of maximal split torus $\fintor$ in $\finbor$ to be the diagonal matrices. Let $\finuniopp$ be the unipotent radical of the opposite parabolic of $\finbor$ with respect to $\fintor$, that is, the unipotent lower triangular matrices. The Weyl group $\finweyl=N(\fintor)/\fintor$ is isomorphic to $\symn$, the symmetric group on $\left\{1,\dots,n\right\}$, and has a canonical splitting sending each permutation in $\symn$ to the corresponding permutation matrix in $N(\fintor)\subseteq\fingp$.

\begin{definition}
    We write $\triv$ for the trivial representation.

    Write $\finrep$ for the category of $\fingp$-representations over $\coeffs$.

    Write $\finproj=\ind_{\finbor}^{\fingp}\triv$.

    Let $\uniblockfin$ be the full subcategory of $\finrep$ consisting of all representations all of whose irreducible subquotients are subquotients of $\finproj$. Note that this is a direct summand of $\finrep$ (see eg \cite{vigneras2003schur} D12), and hence a direct sum of blocks. We call the blocks in this summand, as well as the representations in the summand, unipotent. Write $\nonunblofin$ for the direct sum of all non-unipotent blocks.

    We call the block containing the trivial representation $\triv$ the principal block.
\end{definition}

Observe that the principal block is unipotent, and that both $\finproj$ and $\triv$ are unipotent and finitely generated.

Let $\fphs$ be the set of parabolic subgroups of $\fingp$ containing $\finbor$. Elements of $\fphs$ are called standard parabolic subgroups, and each element is the set of block upper triangular matrices for some block decomposition. For $\finpara\in\fphs$, let $\finlevi$ be the Levi subgroup of $\finpara$ containing $\fintor$, that is, the set of block diagonal matrices with respect to the above block decomposition. Let $\finleviuni$ be the unipotent radical of the minimal parabolic subgroup of blockwise upper triangular matrices of $\finlevi$, that is, the set of blockwise unipotent upper triangular matrices. This is generated by the positive root groups of $\finlevi$ with respect to $\fintor$: each positive root group is the unipotent upper triangular matrices in $\finleviuni$ which are zero in all but one fixed non-diagonal entry. The root group is simple when said entry is on the superdiagonal. Let $\genchars$ be the set characters of $\finleviuni$ that are nontrivial on all simple root groups but trivial on all non-simple positive root groups.

\begin{definition}
    Write
    \[\funig=\bigoplus_{\finpara\in\fphs}\bigoplus_{\genchar\in\genchars}\ind_{\finpara}^{\fingp}\infl_{\finlevi}^{\finpara}\ind_{\finleviuni}^{\finlevi}\genchar.\]

    Let $\finanni$ be the annihilator of $\finproj$ in $\hecfin$. Then put $\fannig=\funig/\finanni\funig$, and put $\hecfinanni=\hecfin/\finanni$.
    
    Put $\anniblockfin$ the full subcategory of $\finrep$ consisting of representations $M$ with $\finanni M=0$.
\end{definition}

Observe that, as $\finproj$ is unipotent, $\finanni$ contains $\hecfinonu$, and so $\fannig$ is also unipotent, and $\anniblockfin$ is a subcategory of $\uniblockfin$.

\begin{definition}
    Write
    \[\fourg=\bigoplus_{\finpara\in\fphs}\ind_{\finpara}^{\fingp}\triv.\]
\end{definition}

Observe that $\fourg$ is a finite direct sum of submodules of $\finproj$, and so is unipotent and finitely generated.

\begin{proposition}
    The unipotent part of $\funig$ is a progenerator for $\uniblockfin$.
\end{proposition}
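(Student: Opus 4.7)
The plan is to show two things: (i) $\funig$ itself is projective as a $\coeffs\fingp$-module, and (ii) every simple object of $\uniblockfin$ appears as a quotient of $\funig$. These together imply that the unipotent part of $\funig$, being a direct summand of $\funig$ cut out by the central idempotent of $\uniblockfin$, is a finitely generated projective module of $\uniblockfin$ surjecting onto every simple of $\uniblockfin$---that is, a progenerator. The surjections from $\funig$ onto unipotent simples automatically factor through its unipotent summand, so no further work is needed once (i) and (ii) are in hand.

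For (i), first rewrite each summand of $\funig$. Using the decomposition $\finuni = \finleviuni \ltimes N$, where $N$ is the unipotent radical of $\finpara$, extend $\genchar$ trivially across $N$ to a character $\tilde{\genchar}$ of $\finuni$. A direct Frobenius reciprocity check gives
\[
\infl_{\finlevi}^{\finpara}\ind_{\finleviuni}^{\finlevi}\genchar \cong \ind_{\finuni}^{\finpara}\tilde{\genchar},
\]
so by transitivity of induction each summand of $\funig$ equals $\ind_{\finuni}^{\fingp}\tilde{\genchar}$. Since $\finuni$ is a $p$-group and $\charstc(\coeffs) = l \neq p$, the group algebra $\coeffs\finuni$ is semisimple, hence $\tilde{\genchar}$ is projective as a $\coeffs\finuni$-module. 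Induction from a subgroup preserves projectivity (as $\coeffs\fingp$ is free over $\coeffs\finuni$), so each summand of $\funig$ is projective, and therefore so is $\funig$.

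Step (ii) is the main obstacle. We invoke the $l$-modular Gelfand--Graev theory for finite general linear groups developed by Takeuchi \cite{takeuchi1996group}, the finite analogue of \cite{vigneras2003schur}, which shows that every simple unipotent $\coeffs\fingp$-module occurs as a quotient of a parabolically induced Gelfand--Graev representation $\ind_{\finpara}^{\fingp}\infl_{\finlevi}^{\finpara}\ind_{\finleviuni}^{\finlevi}\genchar$ for a suitable standard Levi and generic character. Since $\funig$ is the direct sum of all such parabolically induced Gelfand--Graev representations over $(\finpara,\genchar) \in \fphs \times \genchars$, every simple object of $\uniblockfin$ is a quotient of $\funig$. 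This step rests on the full modular classification of simple unipotent representations via Dipper--James theory and admits no quick elementary proof; we will cite the result rather than reprove it.
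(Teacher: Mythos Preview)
Your proposal is correct and follows essentially the same route as the paper: establish projectivity of $\funig$ and then show every simple unipotent is a quotient. You give a more explicit argument for projectivity (rewriting each summand as $\ind_{\finuni}^{\fingp}\tilde{\genchar}$ and using that $\finuni$ is a $p$-group), whereas the paper merely asserts this ``by construction''; for step~(ii) the paper invokes Vign\'eras's Property~H1 (nonzero vectors under a suitable unipotent subgroup) together with \cite[5.4(3)]{vigneras2003schur}, while you cite the equivalent statement in \cite{takeuchi1996group}, which the paper itself mentions as an alternative reference.
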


\begin{proof}
    The proof mirrors Theorem 5.13(1) and 5.10 of \cite{vigneras2003schur} (see also \cite{takeuchi1996group} for another proof). $\funig$ is by construction finitely generated and projective, and for any unipotent irreducible representation we may apply Property H1 of \cite{vigneras2003schur} to show that it has a nonzero vector invariant under a certain unipotent subgroup, which implies that it is a quotient of $\funig$ by 5.4(3) of the same source. Hence we conclude by said source's Corollary 3.7.
\end{proof}

\begin{corollary}\label{fannig progenerator}
    $\fannig$ is a progenerator of $\anniblockfin$.
\end{corollary}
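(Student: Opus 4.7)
The plan is to leverage the preceding proposition, which identifies the unipotent part of $\funig$ as a progenerator of $\uniblockfin$, together with the observation that $\anniblockfin$ is the full subcategory of $\uniblockfin$ cut out by the condition $\finanni M = 0$. The module $\fannig = \funig/\finanni\funig$ should be thought of as the natural image of this progenerator in $\anniblockfin$. Membership $\fannig \in \anniblockfin$ is immediate from $\finanni \cdot (\funig/\finanni\funig) = 0$, and $\fannig$ is finitely generated as a quotient of the finitely generated $\funig$.

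For the generator property, given any $N \in \anniblockfin \subseteq \uniblockfin$, the preceding proposition furnishes a surjection $\funig^{(I)} \twoheadrightarrow N$ for some index set $I$. Since $\finanni N = 0$, this map annihilates $\finanni\funig^{(I)}$, and hence factors through a surjection $\fannig^{(I)} \twoheadrightarrow N$.

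For projectivity in $\anniblockfin$, the key tool is the natural identification $\Hom_{\fingp}(\fannig, N) \cong \Hom_{\fingp}(\funig, N)$ for $N \in \anniblockfin$: any morphism $\phi \colon \funig \to N$ satisfies $\phi(\finanni\funig) \subseteq \finanni N = 0$ and so factors uniquely through $\fannig$, while maps from $\fannig$ lift tautologically. Because $\anniblockfin$ is closed under subobjects and quotients in $\finrep$, short exact sequences in $\anniblockfin$ remain exact in $\finrep$, and combining this with projectivity of $\funig$ (which holds by the preceding proposition) gives exactness of $\Hom_{\fingp}(\fannig, -)$ on $\anniblockfin$.

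The only subtlety I anticipate is mild bookkeeping around the non-unipotent summand of $\funig$, which is killed in passing to $\fannig$ because $\finanni$ contains $\hecfinonu$, and which does not interfere with either step above: maps from non-unipotent summands into unipotent targets vanish by orthogonality of blocks, and the unipotent summand is itself projective as a direct summand of the projective module $\funig$.
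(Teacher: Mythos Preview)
Your argument is correct and is precisely the standard way to deduce this corollary; the paper states it without proof, treating it as an immediate consequence of the preceding proposition (and implicitly of the same mechanism used in \cite{vigneras2003schur}, Proposition 5.10, for the $p$-adic analogue). Your bookkeeping on the non-unipotent summand is also accurate: since $\hecfinonu\subseteq\finanni$, passing to $\fannig$ kills it, and the identification $\Hom_{\fingp}(\fannig,N)\cong\Hom_{\fingp}(\funig,N)$ for $N\in\anniblockfin$ goes through unchanged.
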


We seek to establish \cref{finite main theorem}, which shall enable us to describe the $p$-adic setting. Our proof proceeds by describing $\fannig$ using the work of Dipper and James (\cite{james1986irreducible, dipper1989schur}). Recall that a partition $\lambda$ of a nonnegative integer $n$ is a non-increasing tuple $(\lambda_i)$ of positive integers with sum $n$. The dominance order on partitions is the partial order where $\lambda\geq\mu$ precisely when $\sum_{i=1}^j\lambda_i\geq\sum_{i=1}^j\mu_i$ for all $i$. We associate to each partition $\lambda$ a standard parabolic $\finpara(\lambda)$ of $\fingp$, namely the upper block triangular matrices with the $i$th block having size $\lambda_i$.

In Theorem 8.1 of \cite{james1986irreducible}, it is shown that there is a bijection from partitions $\lambda$ of $n$ to unipotent irreducible representations $D(\lambda)$. As this claim holds for any choice of $\coeffs$ algebraically closed of characteristic $l\neq p$, it is also true for an algebraically closed field $K$ of characteristic $0$. Theorem 8.1 of \cite{james1986irreducible} gives a canonical choice $S(\lambda)$ for an $l$-modular reduction of the unipotent irreducible representation over $K$ corresponding to $\lambda$.

We now introduce the finite Schur algebra, whose decomposition matrix is deeply entwined with that of $\fingp$. In later sections, we shall see a $p$-adic analogue, which we shall simply call the Schur algebra, hence the use of the qualifier `finite' for this version (perhaps it would be better to call the $p$-adic version the `Iwahori-Schur' algebra, but we have not seen this convention anywhere).

\begin{definition}
    The finite Schur algebra $\finschur$ is the algebra $\End_{\hecfin}(\fourg)$.
\end{definition}

The surprising property of $\finschur$ that makes it relevant for us is the following:

\begin{proposition}
    $\End_{\hecfin}(\fannig)$ is Morita equivalent to $\finschur$.
\end{proposition}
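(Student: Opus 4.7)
The plan is to show that $\fourg$ is itself a progenerator of $\anniblockfin$, so that combined with \cref{fannig progenerator} both $\End_{\hecfin}(\fannig)$ and $\End_{\hecfin}(\fourg)=\finschur$ are endomorphism algebras of progenerators of $\anniblockfin$ and hence Morita equivalent. First I would check that $\fourg\in\anniblockfin$: for each $\finpara\in\fphs$, transitivity of parabolic induction gives $\finproj=\ind_{\finpara}^{\fingp}\bigl(\ind_{\finbor}^{\finpara}\triv\bigr)$, and the canonical inclusion of $\triv$ into $\ind_{\finbor}^{\finpara}\triv$ as the sum-of-cosets vector induces an injection $\ind_{\finpara}^{\fingp}\triv\hookrightarrow\finproj$. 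Since $\finanni=\Ann(\finproj)$ annihilates every subrepresentation of $\finproj$, it annihilates each summand of $\fourg$.

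For generation, I would invoke the Dipper--James classification \cite{james1986irreducible}: every unipotent simple has the form $D(\lambda)$ for some partition $\lambda$ of $n$, and $D(\lambda)$ arises as a quotient of the Specht submodule $S(\lambda)$ of $M(\lambda)=\ind_{\finpara(\lambda)}^{\fingp}\triv$. Since $M(\lambda)$ is a summand of $\fourg$, $\fourg$ surjects onto every unipotent simple and so generates $\anniblockfin$. The main obstacle is establishing projectivity of $\fourg$ in $\anniblockfin$: here I would use the Dipper--James decomposition of each permutation module $M(\lambda)$ into Young modules $Y(\mu)$ together with the Dipper--James--Takeuchi identification (see \cite{dipper1989schur,takeuchi1996group}) of the Young modules as a complete set of indecomposable projectives of $\anniblockfin$. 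Since $Y(\lambda)$ occurs as a summand of $M(\lambda)\subseteq\fourg$, this exhibits $\fourg$ as a finite direct sum realising every indecomposable projective of $\anniblockfin$, so $\fourg$ is a progenerator.

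Finally, the Morita equivalence $\Hom_{\hecfin}(\fannig,-)\colon\anniblockfin\xrightarrow{\sim}\Mod(\End_{\hecfin}(\fannig)^{\mathrm{op}})$ supplied by \cref{fannig progenerator} sends the progenerator $\fourg$ to a progenerator of $\Mod(\End_{\hecfin}(\fannig)^{\mathrm{op}})$ whose endomorphism algebra is canonically $\End_{\hecfin}(\fourg)=\finschur$, yielding the claimed Morita equivalence. The technical crux is the projectivity step, which rests on the Dipper--James--Takeuchi structure theory for unipotent representations and Young modules.
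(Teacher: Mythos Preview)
Your strategy has a genuine gap: $\fourg$ is \emph{not} a progenerator of $\anniblockfin$ in general, so the argument cannot be completed along these lines.

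The generation step contains a logical slip. You observe that $D(\lambda)$ is a quotient of the Specht module $S(\lambda)$, which is a \emph{submodule} of $M(\lambda)=\ind_{\finpara(\lambda)}^{\fingp}\triv$; but a quotient of a submodule need not be a quotient of the ambient module, and here it is not. Take $n=2$ with $l$ odd and $l\mid q+1$. Then $|\finbor|=q(q-1)^2$ is prime to $l$, so $(-)^{\finbor}$ is exact; comparing $\dim(\ind_{\finbor}^{\fingp}\triv)^{\finbor}=|\finweyl|=2$ with the composition factors $2\cdot\triv+D((1,1))$ of $\ind_{\finbor}^{\fingp}\triv$ forces $D((1,1))^{\finbor}=0$. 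By Frobenius reciprocity $\Hom_{\fingp}(M(\mu),D((1,1)))=D((1,1))^{\finpara(\mu)}\subseteq D((1,1))^{\finbor}=0$ for every $\mu$, so $D((1,1))$ is not a quotient of any summand of $\fourg$, and $\fourg$ fails to generate $\anniblockfin$. (This is exactly why the paper's later proof that $\mathscr{D}\subseteq\langle\fourg\rangle_{\adbmod{\hecfinuni}}$ requires the triangulated structure and an induction down the dominance order rather than a direct surjection.)

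The projectivity step rests on a misattribution. Young modules are by definition the indecomposable summands of $\fourg$, so their images under $\Hom_{\hecfin}(\fourg,-)$ are the indecomposable projective $\finschur$-modules; but that does not make the $Y(\mu)$ themselves projective in $\anniblockfin$. Indeed, in the example above the projective cover in $\anniblockfin$ of $D((1,1))$ cannot be a summand of $\fourg$, hence is not a Young module. So the Young modules are neither a complete set of, nor in general among, the indecomposable projectives of $\anniblockfin$.

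The paper itself does not give an independent argument here: it simply invokes Takeuchi \cite{takeuchi1996group} (and Vign\'eras \cite{vigneras2003schur}, Theorem~5.8). Takeuchi's proof does not proceed by exhibiting $\fourg$ as a progenerator of $\anniblockfin$; the Morita equivalence is obtained by a more indirect comparison between the Gelfand--Graev construction underlying $\fannig$ and the permutation modules underlying $\fourg$.
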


\begin{proof}
    This is part (a) of the theorem in the introduction of \cite{takeuchi1996group} (see also Theorem 5.8 of \cite{vigneras2003schur}).
\end{proof}

To show that $\langle\fannig\rangle_{\adbmod{\hecfinuni}}=\langle\fourg\rangle_{\adbmod{\hecfinuni}}=\adbmod{\hecfinuni}$, we proceed by showing that the first two categories contain every unipotent irreducible representation. This in fact suffices, as the next lemma shows.

\begin{definition}
 Let $\mathscr{D}$ be the set of all unipotent irreducible representations of $\fingp$. That is, $\mathscr{D}$ is the set of $D(\lambda)$ for all partitions $\lambda$ of $n$. 
\end{definition}

\begin{lemma}
    $\adbmod{\hecfinuni}=\langle \mathscr{D}\rangle_{\adbmod{\hecfinuni}}$.
\end{lemma}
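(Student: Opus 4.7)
The plan is to reduce the statement in two stages to the claim that every simple object of $\uniblockfin$ lies in $\mathscr{D}$, which is true by the very definition of $\mathscr{D}$. The two reductions are both standard devissage arguments in a triangulated category, the first reducing complexes to their terms (viewed as stalk complexes) and the second reducing finitely generated modules to their composition factors.

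For the first reduction, I would take $\cx{M} \in \adbmod{\hecfinuni}$, so $\cx{M}$ is quasi-isomorphic to a finite length complex of finitely generated unipotent representations concentrated in degrees $a, a+1, \dots, b$. Using the stupid truncation, there is a distinguished triangle
\[
\sigma^{\leq b-1}\cx{M} \longrightarrow \cx{M} \longrightarrow M^b[-b] \longrightarrow \sigma^{\leq b-1}\cx{M}[1],
\]
so by induction on $b-a$ it is enough to show that each stalk complex $M[j]$, for $M$ a finitely generated unipotent representation of $\fingp$, lies in $\langle \mathscr{D}\rangle_{\adbmod{\hecfinuni}}$. Since shifts stay inside any triangulated subcategory, it suffices to do this for $j=0$.

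For the second reduction, the key observation is that $\hecfin$ is finite-dimensional over $\coeffs$ because $\fingp$ is a finite group, hence so is its direct summand $\hecfinuni$. In particular every finitely generated $\hecfinuni$-module has finite composition length. For such an $M$ of composition length $\geq 1$, a short exact sequence
\[
0 \longrightarrow N \longrightarrow M \longrightarrow D \longrightarrow 0
\]
with $D$ simple yields a distinguished triangle in $\adbmod{\hecfinuni}$; since $N$ has strictly smaller length, induction reduces us to the case where $M$ is a simple unipotent representation. By definition of $\uniblockfin$ and of $\mathscr{D}$ the simple objects of $\uniblockfin$ are precisely the elements of $\mathscr{D}$, so $M\in\mathscr{D}\subseteq\langle\mathscr{D}\rangle_{\adbmod{\hecfinuni}}$, finishing the proof.

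I do not anticipate any real obstacle: the argument is pure triangulated bookkeeping once one notes that $\hecfinuni$ is finite-dimensional so that Noetherian-ness and finite length are automatic. The only thing to be slightly careful about is that the induction takes place inside $\adbmod{\hecfinuni}$ (rather than inside some larger derived category), which is fine because both stupid truncation and short exact sequences of finitely generated modules produce triangles whose three vertices all lie in $\adbmod{\hecfinuni}$.
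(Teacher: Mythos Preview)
Your proof is correct and follows essentially the same approach as the paper: the paper's proof simply notes that finitely generated $\fingp$-representations have finite length and hence every object of $\adbmod{\hecfinuni}$ is built from $\mathscr{D}$ by finitely many distinguished triangles, which is exactly the two-stage d\'evissage (stupid truncation, then composition series) that you spell out in detail.
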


\begin{proof}
    As the $D(\lambda)$ are finitely generated and unipotent, we know that $\langle \mathscr{D}\rangle_{\adbmod{\hecfinuni}}\subseteq\adbmod{\hecfinuni}$. But all finitely generated representations of $\fingp$ have finite length, and so all objects of $\adbmod{\hecfinuni}$ arise from objects in $\mathscr{D}$ via finitely many distinguished triangles.
\end{proof}

Thus it is enough to show that $\langle\fannig\rangle_{\adbmod{\hecfinuni}}$ and $\langle\fourg\rangle_{\adbmod{\hecfinuni}}$ contain $\mathscr{D}$. We first consider $\fourg$, for which we make use of the explicit structure theory of the $D(\lambda)$.

\begin{lemma}
    $\mathscr{D}\subseteq \langle\fourg\rangle_{\adbmod{\hecfinuni}}$.
\end{lemma}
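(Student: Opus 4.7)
The plan is to induct on $\lambda$ in the dominance order, from the top downward, and extract $D(\lambda)$ from the triangulated category by peeling it off the composition series of the $\finpara(\lambda)$-summand of $\fourg$. The crucial input is the unitriangularity coming from Dipper--James theory (\cite{james1986irreducible,dipper1989schur}): the permutation module $\ind_{\finpara(\lambda)}^{\fingp}\triv$ carries a Specht filtration with sections $S(\mu)$ for $\mu\trianglerighteq\lambda$ in which $S(\lambda)$ appears exactly once, and each $S(\mu)$ has $D(\mu)$ as head with all further composition factors $D(\nu)$ satisfying $\nu\rhd\mu$. Combining these gives that every composition factor of $\ind_{\finpara(\lambda)}^{\fingp}\triv$ is of the form $D(\mu)$ with $\mu\trianglerighteq\lambda$, and that $D(\lambda)$ appears with multiplicity exactly one.

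For the base case, when $\lambda=(n)$ the standard parabolic $\finpara((n))$ equals all of $\fingp$, so $\ind_{\finpara((n))}^{\fingp}\triv=\triv=D((n))$ is a direct summand of $\fourg$ and therefore lies in $\langle\fourg\rangle_{\adbmod{\hecfinuni}}$.

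For the inductive step, fix $\lambda$ and assume $D(\mu)\in\langle\fourg\rangle_{\adbmod{\hecfinuni}}$ for every $\mu\rhd\lambda$. Put $Y=\ind_{\finpara(\lambda)}^{\fingp}\triv$, which is a direct summand of $\fourg$ and hence lies in $\langle\fourg\rangle_{\adbmod{\hecfinuni}}$. Choose a composition series $0=M_0\subset M_1\subset\cdots\subset M_k=Y$ and let $i$ be the unique index with $M_i/M_{i-1}\cong D(\lambda)$; for every $j\ne i$ the subquotient $M_j/M_{j-1}$ is isomorphic to some $D(\mu_j)$ with $\mu_j\rhd\lambda$, hence lies in $\langle\fourg\rangle_{\adbmod{\hecfinuni}}$ by hypothesis. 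Each short exact sequence $0\to M_{j-1}\to M_j\to M_j/M_{j-1}\to 0$ is a distinguished triangle, so building up from $M_0=0$ shows $M_{i-1}\in\langle\fourg\rangle_{\adbmod{\hecfinuni}}$; the same reasoning applied to the quotient filtration $0\subset M_{i+1}/M_i\subset\cdots\subset M_k/M_i=Y/M_i$ shows $Y/M_i\in\langle\fourg\rangle_{\adbmod{\hecfinuni}}$. The triangle $M_i\to Y\to Y/M_i$ then places $M_i$ in $\langle\fourg\rangle_{\adbmod{\hecfinuni}}$, and finally the triangle $M_{i-1}\to M_i\to D(\lambda)$ places $D(\lambda)$ there, completing the induction.

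The main obstacle is establishing the precise Dipper--James unitriangularity input; once this decomposition-matrix statement is in hand, the triangulated-category manipulation is essentially formal, relying only on the fact that a triangulated subcategory closed under direct summands is closed under the third vertex of any distinguished triangle whose other two vertices it already contains.
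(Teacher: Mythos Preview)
Your proof is correct and follows essentially the same approach as the paper: a downward induction on the dominance order, using the Dipper--James unitriangularity (Theorems 7.19(iii) and 8.1 of \cite{james1986irreducible}) to conclude that all composition factors of $\ind_{\finpara(\lambda)}^{\fingp}\triv$ are $D(\mu)$ with $\mu\trianglerighteq\lambda$ and that $D(\lambda)$ occurs exactly once, then extracting $D(\lambda)$ via distinguished triangles. Your write-up is slightly more explicit about the triangle manipulations and spells out the base case, but the substance is identical.
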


\begin{proof}
    We show $D(\kappa)\in \langle\fourg\rangle_{\adbmod{\hecfinuni}}$ by decreasing induction along the dominance order for $\kappa$. First, observe that $\ind_{\finpara({\kappa})}^{\fingp}\triv$ is a summand of $\fourg$, and so $\ind_{\finpara({\kappa})}^{\fingp}\triv\in\langle\fourg\rangle_{\adbmod{\hecfinuni}}$.
    
    Next, by Theorem 7.19(iii) of \cite{james1986irreducible}, $\ind_{\finpara({\kappa})}^{\fingp}\triv$ has a composition series with all factors of the form $S(\lambda)$ with $\lambda\geq\kappa$, in which $S(\kappa)$ occurs with multiplicity $1$. But by Theorem 8.1 of \cite{james1986irreducible}, $S(\lambda)$ itself has a composition series with all factors of the form $D(\mu)$ with $\mu\geq\lambda$, in which $D(\lambda)$ occurs with multiplicity $1$. Thus $\ind_{\finpara({\kappa})}^{\fingp}\triv$ has a composition series with all factors of the form $D(\mu)$ with $\mu\geq\kappa$, in which $D(\kappa)$ occurs with multiplicity $1$.

    But by the inductive hypothesis, all $D(\mu)$ with $\mu>\kappa$ are in $\langle\fourg\rangle_{\adbmod{\hecfinuni}}$. Thus, by considering the sequence of distinguished triangles giving the composition series $\ind_{\finpara({\kappa})}^{\fingp}\triv$ in terms of $D(\mu)$, we see that $D(\kappa)\in \langle\fourg\rangle_{\adbmod{\hecfinuni}}$.
\end{proof}

To show the same for $\fannig$, we make use of the following property, which comes from deep results about $\finschur$.

\begin{lemma}
    $\finschur$ has finite global dimension.
\end{lemma}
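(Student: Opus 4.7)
The plan is to identify $\finschur$ with the $q$-Schur algebra in the sense of Dipper-James and then invoke the classical quasi-heredity result for this algebra.

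By Iwahori's theorem, $\End_{\hecfin}(\ind_{\finbor}^{\fingp}\triv)$ is isomorphic to the Iwahori-Hecke algebra $\hecglnco$ of type $A_{n-1}$ with parameter $q$, and the functor $\Hom_{\hecfin}(\ind_{\finbor}^{\fingp}\triv, -)$ sends each summand $\ind_{\finpara(\lambda)}^{\fingp}\triv$ of $\fourg$ to the corresponding permutation Hecke module $M^\lambda$. Therefore $\finschur \cong \End_{\hecglnco}\bigl(\bigoplus_\lambda M^\lambda\bigr)$, where $\lambda$ ranges over compositions of $n$. Collecting isomorphic summands (compositions giving conjugate parabolics yield isomorphic $M^\lambda$) puts this endomorphism algebra into Morita equivalence with the Dipper-James $q$-Schur algebra, typically denoted $S_q(n,n)$.

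Next I would appeal to the fact, proved by Parshall-Wang and independently via the Dipper-James construction, that $S_q(n,n)$ is quasi-hereditary, with heredity chain indexed by the dominance order on partitions of $n$ — the same dominance order already seen governing the composition factors of $\ind_{\finpara(\kappa)}^{\fingp}\triv$ in the previous lemma. By the Cline-Parshall-Scott theorem, any quasi-hereditary algebra has finite global dimension, so $S_q(n,n)$ does, and Morita equivalence transports this conclusion to $\finschur$.

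The main obstacle is being precise about the identification of $\finschur$ with $S_q(n,n)$, particularly checking that the $\coeffs$-form of the $q$-Schur algebra matches the one used in the literature so that quasi-heredity applies verbatim. This is a standard translation via the Iwahori-Matsumoto isomorphism, and the quasi-heredity of the $q$-Schur algebra holds regardless of $\charstc(\coeffs)$ or the divisibility properties of $q$, so once the dictionary is in place the conclusion is immediate.
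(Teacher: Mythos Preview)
Your approach is essentially the paper's: identify $\finschur$ up to Morita equivalence with a Dipper--James $q$-Schur algebra, invoke its quasi-heredity, and conclude finite global dimension via Cline--Parshall--Scott. The paper skips your Iwahori dictionary and cites Theorem~2.24 of \cite{dipper1989schur} and Lemma~1.3 of \cite{dipper1991tensor} directly for the Morita equivalence with $S_q(N,n)$ ($N\geq n$), together with \cite{cline1990integral} (and \cite{du1998cells}) for quasi-heredity and its finite-global-dimension consequence---the Dipper--James reference is precisely what justifies the ``Therefore'' step you flagged as needing care.
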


\begin{proof}
    By Theorem 3.7.2 of \cite{cline1990integral} (see also the main theorem of \cite{du1998cells}), a family of algebras $\finbigschur$ (written $S_q(N,n,\coeffs)$ in their notation) are quasi-hereditary. By Theorem 3.6(a) of \cite{cline1990integral} any quasi-hereditary algebra over a field has finite global dimension. But by Theorem 2.24 of \cite{dipper1989schur} and Lemma 1.3 of \cite{dipper1991tensor} $\finbigschur$ and $\finschur$ are Morita equivalent whenever $N\geq n$.
\end{proof}

This allows us to conclude by a purely formal argument.

\begin{lemma}\label{finite first part}
    $\mathscr{D}\subseteq \langle\fannig\rangle_{\adbmod{\hecfinuni}}$.
\end{lemma}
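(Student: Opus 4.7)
The plan is to transfer the question to $\adbmod{\hecfinanni}$, where $\fannig$ is a progenerator and the algebra has finite global dimension, so that the statement becomes purely formal. The first step is to verify $\mathscr{D}\subseteq\anniblockfin$, i.e.\ that $\finanni$ annihilates every $D(\lambda)$. Since $\finanni=\Ann(\finproj)$ annihilates all subquotients of $\finproj$, it suffices to observe that each $D(\lambda)$ appears as a composition factor of $\finproj=\ind_{\finpara((1^n))}^{\fingp}\triv$: by Theorem 7.19(iii) of \cite{james1986irreducible} every $S(\lambda)$ occurs here with multiplicity $1$, and by Theorem 8.1 of the same paper $D(\lambda)$ occurs in $S(\lambda)$ with multiplicity $1$. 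So each $D(\lambda)$ is naturally a module over the quotient algebra $\hecfinanni=\hecfin/\finanni$.

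Next, I would show that $\hecfinanni$ has finite global dimension and is Noetherian. By \cref{fannig progenerator}, $\fannig$ is a progenerator of $\anniblockfin\simeq\Mod(\hecfinanni)$, so $\hecfinanni$ is Morita equivalent to $\End_{\hecfin}(\fannig)^{op}$, which is in turn Morita equivalent to $\finschur$ by the preceding proposition. Morita equivalent algebras share global dimension, so the previous lemma gives $\gldim\hecfinanni<\infty$. Noetherianity is automatic, since $\hecfin$ is finite dimensional over $\coeffs$ and so therefore is its quotient $\hecfinanni$. Hence \cref{perfect is finitely generated} yields $\per(\hecfinanni)=\adbmod{\hecfinanni}$, while \cref{progenerators are classical generators} applied to $\fannig$ over $\hecfinanni$ gives $\langle\fannig\rangle_{\per(\hecfinanni)}=\per(\hecfinanni)$. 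Combining, $\adbmod{\hecfinanni}=\langle\fannig\rangle_{\adbmod{\hecfinanni}}$, and in particular each $D(\lambda)\in\langle\fannig\rangle_{\adbmod{\hecfinanni}}$.

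To conclude, I would transfer back along the inflation (i.e.\ restriction of scalars) functor $\Mod(\hecfinanni)\hookrightarrow\Mod(\hecfin)$, which is exact, preserves finite generation, factors through $\uniblockfin$ (since $\hecfinonu\subseteq\finanni$), and sends $\fannig$ to $\fannig$ and each $D(\lambda)$ to $D(\lambda)$. The induced triangulated functor $\adbmod{\hecfinanni}\to\adbmod{\hecfinuni}$ preserves cones and direct summands, so it maps $\langle\fannig\rangle_{\adbmod{\hecfinanni}}$ into $\langle\fannig\rangle_{\adbmod{\hecfinuni}}$, yielding the desired containment. The main conceptual point, rather than any technical obstacle, is recognising that the mismatch between $\adbmod{\hecfinanni}$ and $\adbmod{\hecfinuni}$ is harmless: classical generation is preserved by any exact functor that maps generator to generator, so the finite global dimension statement for the quotient algebra is exactly what is needed to lift generation from $\fannig$ over $\hecfinanni$ to $\fannig$ over $\hecfinuni$.
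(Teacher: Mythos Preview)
Your proposal is correct and follows essentially the same approach as the paper: use that $\fannig$ is a progenerator of $\anniblockfin$, that this category is Morita equivalent to modules over $\finschur$ (which is Noetherian of finite global dimension), and then combine \cref{progenerators are classical generators} with \cref{perfect is finitely generated}. You are more careful than the paper in two places it leaves implicit: you explicitly verify $\mathscr{D}\subseteq\anniblockfin$ (via each $D(\lambda)$ occurring as a subquotient of $\finproj$), and you explicitly transfer the classical generation statement along the exact inclusion $\adbmod{\hecfinanni}\to\adbmod{\hecfinuni}$.
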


\begin{proof}
    $\fannig$ is a progenerator of $\anniblockfin$, so by \cref{progenerators are classical generators} we have that $\langle\fannig\rangle_{\adbmod{\anniblockfin}}=\per(\anniblockfin)$. But, as $\anniblockfin$ is equivalent to modules over $\finschur$, and the latter has finite global dimension, and is furthermore Noetherian (as it is a finite dimensional algebra over a field), we have by \cref{perfect is finitely generated} that $\per(\anniblockfin)=\adbmod{\anniblockfin}$. Thus $\mathscr{D}\subseteq \adbmod{\anniblockfin}=\per(\anniblockfin)=\langle\fannig\rangle_{\adbmod{\anniblockfin}}$.
\end{proof}

Thus we have \cref{finite main theorem}.

\begin{theorem}\label{finite generators equivalent}
    $\langle\fannig\rangle_{\adbmod{\hecfinuni}}=\langle\fourg\rangle_{\adbmod{\hecfinuni}}=\adbmod{\hecfinuni}$.
\end{theorem}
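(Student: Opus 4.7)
The plan is that this theorem is essentially a formal consequence of the three lemmas immediately preceding it; the substantive work has already been done. I would organise the proof as a short corollary-style argument.

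First, I would establish the inclusions $\langle\fannig\rangle_{\adbmod{\hecfinuni}}\subseteq\adbmod{\hecfinuni}$ and $\langle\fourg\rangle_{\adbmod{\hecfinuni}}\subseteq\adbmod{\hecfinuni}$. These are immediate: $\fannig$ was observed to be unipotent (since $\finproj$ is unipotent so $\finanni\supseteq\hecfinonu$, whence $\fannig$ is a quotient of the unipotent, finitely generated $\funig$), and $\fourg$ was observed to be unipotent and finitely generated as a direct sum of submodules of $\finproj$. So both objects lie in $\adbmod{\hecfinuni}$, and hence so do the triangulated subcategories they classically generate.

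For the reverse inclusions, I would invoke the preceding two lemmas, which give $\mathscr{D}\subseteq\langle\fourg\rangle_{\adbmod{\hecfinuni}}$ and $\mathscr{D}\subseteq\langle\fannig\rangle_{\adbmod{\hecfinuni}}$. Combining this with the lemma that $\adbmod{\hecfinuni}=\langle\mathscr{D}\rangle_{\adbmod{\hecfinuni}}$, and using that $\langle\cdot\rangle$ is monotone with respect to inclusion of generating sets, yields $\adbmod{\hecfinuni}\subseteq\langle\fourg\rangle_{\adbmod{\hecfinuni}}$ and $\adbmod{\hecfinuni}\subseteq\langle\fannig\rangle_{\adbmod{\hecfinuni}}$.

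There is no real obstacle here, since all the genuine content lives in the earlier lemmas: the Dipper--James composition series result driving the $\fourg$ case, and the finite global dimension of $\finschur$ together with \cref{progenerators are classical generators}, \cref{perfect is finitely generated}, and \cref{fannig progenerator} driving the $\fannig$ case. I would therefore keep the proof to essentially two lines, citing the three lemmas and chaining the inclusions in both directions.
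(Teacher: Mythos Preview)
Your proposal is correct and matches the paper's approach exactly: the paper likewise treats this theorem as an immediate corollary of the three preceding lemmas, giving no further argument beyond ``Thus we have \cref{finite main theorem}.'' Your explicit spelling-out of the inclusions in both directions is if anything more detailed than what the paper records.
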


\section{The Derived \texorpdfstring{$l$}{l}-Modular Unipotent Block of \texorpdfstring{$p$}{p}-adic \texorpdfstring{$\GL_n$}{GLn}}

Let $\padicfield$ denote a $p$-adic field, with ring of integers $\roi$, uniformiser $\varpi$, and residue field $\resfield$. Write $\padicgp=\redgp(\padicfield)$ for the $\padicfield$-points of $\redgp=\gln$, which we consider as a topological group via the topology on $\padicfield$. Let $\maxcomp=\redgp(\roi)$. This is a maximal parahoric subgroup. Let $\propmax=1+\varpi M_{n,n}(\roi)$ be its pro-$p$ radical. Then $\fingp$ is the reductive quotient of $\maxcomp$. We fix a Haar measure on $\padicgp$ with $\mu(\propmax)=1$.

Inside $\maxcomp$, we let $\iwa$ be the preimage of $\finbor$ under the quotient $\maxcomp\rightarrow\fingp$. Then $\iwa$ is an Iwahori subgroup. We let $\iwadiag$ be the intersection of $\iwa$ and the split torus $\splittor$ of diagonal matrices. Then $\iwadiag$ is the compact part of $\splittor$. The Iwahori-Weyl group $\weyl=N(\splittor)/\iwadiag$ is isomorphic to $\mb{Z}^n\rtimes\symn$, where $\symn$ acts on $\mb{Z}^n$ by permuting the entries. Furthermore, $\weyl$ has a canonical splitting by sending $\symn$ to the permutation matrices and $(i_1,\dots,i_n)\in\mb{Z}^n$ to the diagonal matrix with $(j,j)$th entry $\varpi^{i_j}$.

Many of the properties of $\fingp$ used in the previous section to establish \cref{finite main theorem} can be proven in an analogous manner for $\padicgp$. We first recall Vign\'eras's generalisation of \cite{takeuchi1996group}.

\begin{definition}
    We write $\smrep$ for the category of smooth representations of $\padicgp$ over $\coeffs$. Then $\smrep$ is isomorphic to the category of nondegenerate modules over the global Hecke algebra $\hecglob$.

    We call the block of $\smrep$ containing the trivial representation $\triv$ the unipotent block. Write $\nonuniblock$ for the direct sum of all non-unipotent blocks.
    
    Write $\unihec$ and $\nonunihec$ for the corresponding direct sums of block algebras of $\hecglob$.

        For a representation $M$, write $M_1$ for its summand in the unipotent block.

    Write $\phorind_{\fingp,\maxcomp}^{\padicgp}=\ind_{\maxcomp}^{\padicgp}\infl_{\fingp}^{\maxcomp}$ for the parahoric induction functor.

    Let $\project=\phorind_{\fingp,\maxcomp}^{\padicgp}\finproj=\ind_{\iwa}^{\padicgp}\triv$.

    Let $\annihilator$ be the annihilator in $\hecglob$ of $\project$.
    
    Let $\annihec=\hecglob/\annihilator$, and let $\anniblock$ be the category of $\hecglob$-modules annihilated by $\annihilator$, that is, the category of modules over $\annihec$.

    Let $\phs$ be the set of all parahoric subgroups containing $\iwa$ and contained in $\maxcomp$, that is, the preimages of $\finpara\in\fphs$ under the quotient $\maxcomp\rightarrow\fingp$. We call those parahoric subgroups in $\phs$ standard.

    Let $\unigen=\phorind_{\fingp,\maxcomp}^{\padicgp}\funig=\bigoplus_{\parhor\in\phs}\bigoplus_{\genchar\in\genchars}\ind_{\parhor}^{\padicgp}\infl_{\finlevi}^{\parhor}\ind_{\finleviuni}^{\finlevi}\genchar$, and let $\annigen=\unigen/\annihilator\unigen$.
\end{definition}

\begin{remark}
    In \cite{vigneras2003schur}, the definition of $\phs$ is larger: it is the set of all $\parhor$ containing $\iwa$, not just those contained in $\maxcomp$. Hence, her definition of $\unigen$ contains more summands of the form $\unigen_J=\ind_{\parhor}^{\padicgp}\infl_{\finlevi}^{\parhor}\ind_{\finleviuni}^{\finlevi}\genchar$. However, for $\gln$, all $\parhor$ containing $\iwa$ are conjugate to some $\parhor'$ containing $\iwa$ and contained in $\maxcomp$, and hence each summand $\unigen_{\parhor}$ of her $\unigen$ is isomorphic to some summand $\unigen_{\parhor'}$ of our $\unigen$. Hence, her notion of $\annigen$ is a progenerator if and only if ours is, and the two notions have Morita equivalent endomorphism algebras and classically generate the same category.
\end{remark}

By Section 5.12 of \cite{vigneras2003schur}, we have that $\project\in\uniblock$. Hence the quotient $\hecglob\rightarrow\annihec$ factors through $\unihec$, and so $\anniblock$ is a subcategory of $\uniblock$. We can now state Vign\'eras's result.

\begin{proposition}\label{Vigneras background}
    There exists some positive integer $N$ such that $\annihilator^N\uniblock=0$. Furthermore, $\anniblock$ has progenerator $\annigen$, and $\uniblock$ has progenerator $\unigen_1$.
\end{proposition}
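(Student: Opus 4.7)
The plan is to cite \cite{vigneras2003schur}: this proposition collects three claims that all appear in the main structural theorem(s) of that paper, and the real work lies in reconciling our conventions with hers. The preceding remark already carries out most of this reconciliation---since her $\phs$ is larger than ours but every additional summand of her $\unigen$ is $\padicgp$-conjugate to, and hence isomorphic to, one of ours, the two versions of $\unigen$ and of $\annigen$ have Morita equivalent endomorphism algebras and classically generate the same subcategories of $\smrep$.

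First, I would invoke Vign\'eras's Theorem 5.13 of \cite{vigneras2003schur} (together with its supporting Theorems 5.8 and 5.10) for the progenerator claim on $\anniblock$: that theorem asserts that her version of $\annigen$ is a progenerator of $\anniblock$, so by the remark so is ours. For $\uniblock$, I would use that $\unigen$ is projective by construction---parahoric induction of a finite projective is projective---and that Vign\'eras shows it to generate the direct sum of blocks containing $\uniblock$; the unipotent direct summand $\unigen_1$ is then a progenerator of $\uniblock$, since passing to a block summand preserves both projectivity and generation.

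The remaining claim $\annihilator^N\uniblock = 0$ is the substantive input and is what genuinely constrains the relationship between $\anniblock$ and $\uniblock$. Under the Morita equivalence $\uniblock\simeq\End_{\padicgp}(\unigen_1)\dMod$, this translates to the image of $\annihilator$ in $\End_{\padicgp}(\unigen_1)$ being nilpotent; Vign\'eras establishes this via her description of that endomorphism algebra as a finite-type algebra whose quotient by this ideal is Morita equivalent to the Schur algebra, so the ideal lies in the Jacobson radical of a finite-dimensional quotient controlling the action. The main obstacle is purely referential---locating the exact statements in \cite{vigneras2003schur} and checking that the translation across the conventions of the remark is lossless---with no conceptual difficulty beyond what the remark already addresses.
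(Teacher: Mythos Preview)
Your proposal is correct and takes essentially the same approach as the paper: both simply cite \cite{vigneras2003schur}, with the reconciliation of conventions handled by the preceding remark. The paper's proof is terser---it pinpoints Theorem 5.13(3), Proposition 5.10, and Theorem 5.13(1) of \cite{vigneras2003schur} for the three claims in order---whereas your attributions are slightly reshuffled (you invoke 5.13 with 5.8 and 5.10 for the $\anniblock$ progenerator, and sketch the $\uniblock$ and nilpotence claims rather than citing 5.13(1) and 5.13(3) directly), but this is only a difference in bookkeeping, not in substance.
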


\begin{proof}
    These are respectively Theorem 5.13 (3), Proposition 5.10, and Theorem 5.13 (1) of \cite{vigneras2003schur}.
\end{proof}

We also need several finiteness properties for $\smrep$. The first is well-known.

\begin{proposition}\label{everything is noetherian}
    $\smrep$ is Noetherian.

    In particular, when $\coeffs$ is an algebraically closed field of characteristic not $p$, $\anniblock$ and $\uniblock$ are Noetherian. The former is equivalent to the category of modules over $\schglnco$ by \cref{Vigneras background}, so $\schglnco$ is Noetherian. Similarly, the latter is equivalent to the category of modules over $\End(\Gamma_1)$, and so $\End(\Gamma_1)$ is Noetherian. But being Noetherian is preserved by Morita equivalences of non-unital rings with enough idempotents (\cite{anh1987morita}, Proposition 3.3), and so $\annihec$ and $\unihec$ are Noetherian. Thus, the ideal $\annihilator_1$ in $\unihec$ is finitely generated.
\end{proposition}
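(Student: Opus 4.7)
The proposition's non-trivial content is the first sentence, that $\smrep$ is Noetherian; once this is in hand, the remaining assertions are verified by the chain of reductions given in the statement itself. So the plan is to reduce to showing that each block is Noetherian and handle each block via a progenerator with a Noetherian endomorphism ring.

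The reduction to blocks uses Vign\'eras's block decomposition of $\smrep$, which holds in the $l$-modular setting. A category that decomposes as a product of blocks is Noetherian if and only if each block is. For the unipotent block $\uniblock$, \cref{Vigneras background} provides the progenerator $\unigen_1$, so $\uniblock$ is equivalent to the category of left modules over the unital ring $\End_{\padicgp}(\unigen_1)$, and Noetherianity of the category becomes Noetherianity of this ring. This in turn reduces, via Morita equivalence with $\schglnco$, to Noetherianity of the Schur algebra, which would be established by identifying $\schglnco$ as a finitely generated module over (or a quotient of) an extended affine Hecke algebra of type $A$; the latter is Noetherian, being a finitely generated module over its finitely generated commutative centre. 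For the non-unipotent blocks of $\gln$, the same strategy applies once one invokes the $l$-modular theory of types to produce progenerators with affine-Hecke-type endomorphism algebras.

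With Noetherianity of $\smrep$ in hand, the stated consequences follow almost verbatim as written in the proposition: $\uniblock$ and $\anniblock$ are respectively a direct summand and a Serre subcategory of $\smrep$, and so are Noetherian; \cref{Vigneras background} transfers this to the unital rings $\End(\unigen_1)$ and $\schglnco$; Proposition 3.3 of \cite{anh1987morita} carries Noetherianity across the Morita equivalence between these unital algebras and the non-unital (but locally unital) Hecke algebras $\unihec$ and $\annihec$; and finite generation of the left ideal $\propanni\subseteq\unihec$ follows immediately from Noetherianity of the ring.

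The main obstacle is justifying Noetherianity of $\End(\unigen_1)$, or equivalently of $\schglnco$, in the $l$-modular setting without redeveloping substantial structure theory. Unlike the complex case, where affine Hecke algebras of type $A$ are well understood as Noetherian rings, the modular case requires some care; the cleanest route is likely to be via an explicit Morita or quotient comparison with an extended affine Hecke algebra, either cited directly from Vign\'eras or extracted from later refinements of her work.
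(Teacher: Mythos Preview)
The paper's proof is a one-line citation to Dat's finiteness results (\cite{dat2009finitude}, Theorems 1.3 and 1.5), so the substantive content lies entirely in that reference. Your proposal instead attempts to rebuild the result from the block decomposition and progenerators, but there is a genuine gap.

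The key error is the claimed reduction ``via Morita equivalence with $\schglnco$'' from Noetherianity of $\End_{\padicgp}(\unigen_1)$ to Noetherianity of $\schglnco$. These two algebras are \emph{not} Morita equivalent in general: $\unigen_1$ is a progenerator for $\uniblock$, whereas $\annigen$ (whose endomorphism ring is $\schglnco$) is a progenerator only for the strictly smaller subcategory $\anniblock$. Indeed, the whole point of the paper is that in the non-banal case $\anniblock\subsetneq\uniblock$, and the proposition you are proving records precisely that $\schglnco$ is Noetherian \emph{because} $\anniblock$ inherits Noetherianity from $\smrep$, not the other way round. So your argument for the unipotent block is circular at best and rests on a false Morita equivalence at worst.

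There is a second issue with the non-unipotent blocks. You appeal to ``the $l$-modular theory of types to produce progenerators with affine-Hecke-type endomorphism algebras,'' but as the paper's introduction emphasises, the $l$-modular types are generally \emph{not} progenerators; that failure is exactly why $\project$ does not generate $\uniblock$ and why the Gelfand--Graev construction $\unigen$ is needed. One can instead invoke the equivalences of Chinello and Dat to reduce every block to some $\block_1(H)$, but you are then back to needing Noetherianity of $\End(\unigen_1)$ for varying $H$, which you have not established independently of Dat's theorem. In short, the honest proof here is the citation the paper gives.
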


\begin{proof}
    This is \cite{dat2009finitude} Theorems 1.3 and 1.5.
\end{proof}

The second was established in the author's previous paper.

\begin{definition}
    Let $\schglnco=\End_{\padicgp}(\annigen)$.
\end{definition}

 \begin{proposition}\label{schur algebra finite global dimension}
    $\schglnco$ has finite global dimension.
 \end{proposition}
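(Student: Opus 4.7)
The plan is to obtain a Morita description of $\schglnco$ explicit enough that finite global dimension follows either from the analogous fact for $\finschur$ (established in the previous section) or from the known theory of affine $q$-Schur algebras. The approach mirrors the finite-field case but must account for the noncompactness of $\padicgp$.

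Concretely, I would first compute $\End_{\padicgp}(\unigen)$ via Frobenius reciprocity: since $\unigen$ decomposes as a direct sum over pairs $(\parhor,\genchar)$ of parahorically induced modules, Hom-spaces between summands decompose along double cosets of standard parahorics, which are parametrised combinatorially by the Iwahori-Weyl group $\weyl \cong \mathbb{Z}^n \rtimes \symn$. Quotienting by the ideal generated by $\annihilator$ then yields $\schglnco$. One expects the resulting algebra to be Morita equivalent to an extended affine $q$-Schur algebra, for which finite global dimension is known via quasi-heredity or cellular structures, by the same Cline-Parshall-Scott argument used for $\finschur$. An alternative route is parahoric descent: since $\unigen = \phorind_{\fingp,\maxcomp}^{\padicgp}\funig$ and parahoric induction is exact, one can try to derive a direct comparison between $\anniblock$ and $\anniblockfin$, transferring finite global dimension from the finite to the $p$-adic setting.

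The main obstacle, in either approach, is the combinatorial control of the interaction between the generic characters $\genchar$ appearing in $\unigen$ and the annihilator $\annihilator$. In \cite{berry2024derived}, for $n=2$ and $l$ odd dividing $q+1$, this was handled via cyclic defect theory, which is unavailable for general $n$; an abstract argument leveraging the Noetherianity from \cref{everything is noetherian} together with Vign\'eras's progenerator results recalled in \cref{Vigneras background} is likely required to pin down enough of the structure of $\schglnco$ to exhibit a quasi-hereditary (or derived-equivalent) model.
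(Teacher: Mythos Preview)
Your proposal is not a proof but a research plan, and it ends by naming an unresolved obstacle rather than resolving it. Describing two possible strategies (explicit Hom computations over the Iwahori--Weyl group, or parahoric descent) and then saying that the interaction between the $\genchar$ and $\annihilator$ is the main difficulty does not establish the proposition. As written, there is no argument here to evaluate for correctness.

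More importantly, you have misread the status of this result in the literature. The paper does not prove \cref{schur algebra finite global dimension} from scratch; it simply cites Theorem~3.14 of \cite{berry2024derived}, which already establishes finite global dimension for (an algebra Morita equivalent to) $\schglnco$ in full generality, not merely for $n=2$. The introduction says as much: the finiteness results needed for the first equality of the main theorem ``were already established therein in the necessary generality.'' The only additional ingredient is Vign\'eras's Proposition~5.8, which identifies the algebra of \cite{berry2024derived} with $\End_{\padicgp}(\annigen)$ up to Morita equivalence. Your assumption that \cite{berry2024derived} relied on cyclic defect theory for this particular statement, and that therefore a new argument is needed for general $n$, is incorrect: cyclic defect theory was used elsewhere in that paper (for the explicit submodule lattices), not for the global dimension of the Schur algebra.

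So the fix is not to carry out either of your sketched programmes, but to recognise that the result is already in the literature and cite it.
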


 \begin{proof}
    Theorem 3.14 of \cite{berry2024derived} shows that a certain algebra, which is also denoted therein by $\schglnco$, has finite global dimension. But by Proposition 5.8 of \cite{vigneras2003schur}, said algebra is Morita equivalent to our definition for $\schglnco$.
 \end{proof}

 We now have everything we need to prove the first equality of \cref{main theorem}, in the same manner as we did for the finite case in \cref{finite first part} and for the $\GL_2$ case in Theorem 6.8 of \cite{berry2024derived}.

\begin{lemma}\label{mod is proj}
    $\adbmod{\annihec}=\per(\annihec)$.
\end{lemma}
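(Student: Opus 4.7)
The plan is to verify the two hypotheses of \cref{perfect is finitely generated} for the (non-unital) algebra $\annihec$, namely that it is Noetherian and has finite global dimension, and then read off the conclusion.

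For the Noetherian property, there is nothing new to do: the remark embedded in the statement of \cref{everything is noetherian} already spells this out. Concretely, $\smrep$ is Noetherian by \cite{dat2009finitude}, so its direct summand $\anniblock$ is too; by \cref{Vigneras background}, $\annigen$ is a progenerator of $\anniblock$ with endomorphism ring $\schglnco$, so $\anniblock$ is equivalent to the category of $\schglnco$-modules and hence $\schglnco$ is Noetherian; Morita equivalence for non-unital rings with enough idempotents (\cite{anh1987morita}) then transfers the Noetherian property from $\schglnco$ back to $\annihec$.

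For finite global dimension, the same progenerator $\annigen$ gives a Morita equivalence between $\annihec$ and $\schglnco$. By \cref{schur algebra finite global dimension}, $\schglnco$ has finite global dimension, and global dimension is a Morita invariant (again in the non-unital setting with enough idempotents), so $\annihec$ has finite global dimension as well.

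With both hypotheses in hand, \cref{perfect is finitely generated} applies directly and gives $\per(\annihec)=\adbmod{\annihec}$. The only point requiring any care is the passage through Morita equivalence for non-unital rings; but this has already been handled in exactly the form needed by the discussion following \cref{everything is noetherian}, so no genuine obstacle arises, and the proof is essentially a one-line assembly of previously stated results.
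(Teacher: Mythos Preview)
Your proposal is correct and uses exactly the same ingredients as the paper: the Morita equivalence between $\annihec$ and $\schglnco$ coming from \cref{Vigneras background}, Noetherianity from \cref{everything is noetherian}, finite global dimension from \cref{schur algebra finite global dimension}, and then \cref{perfect is finitely generated}. The only cosmetic difference is that the paper transfers the \emph{question} across the Morita equivalence and applies \cref{perfect is finitely generated} to the unital algebra $\schglnco$, whereas you transfer the \emph{hypotheses} and apply it to $\annihec$; the paper's ordering sidesteps having to invoke \cref{perfect is finitely generated} for a non-unital ring, but the content is identical.
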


\begin{proof}
        By \cref{Vigneras background}, $\anniblock$ has progenerator $\annigen$. Thus, $\annihec$ is Morita equivalent to $\schglnco$. Hence it suffices to show $\adbmod{\schglnco}=\per(\schglnco)$. Now, $\schglnco$ has finite global dimension by \cref{schur algebra finite global dimension}, and it is Noetherian by \cref{everything is noetherian}. Thus we are done by \cref{perfect is finitely generated}.
\end{proof}

\begin{lemma}\label{anni to uni}
    $\adbmod{\unihec}=\langle\adbmod{\annihec}\rangle_{\adbmod{\unihec}}$
\end{lemma}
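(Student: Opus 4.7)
The plan is to build any finitely generated unipotent representation out of $\annihec$-modules via the finite $\annihilator$-adic filtration supplied by \cref{Vigneras background}. The reverse inclusion $\langle \adbmod{\annihec} \rangle_{\adbmod{\unihec}} \subseteq \adbmod{\unihec}$ is automatic: $\annihec$ is a quotient of $\unihec$, so a finitely generated $\annihec$-module is also finitely generated as a $\unihec$-module, and $\adbmod{\unihec}$ is triangulated and closed under direct summands. The content is therefore in the inclusion $\adbmod{\unihec}\subseteq\langle\adbmod{\annihec}\rangle_{\adbmod{\unihec}}$.

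For that, I would first reduce to showing that every finitely generated unipotent module $M$, viewed in degree zero, lies in $\langle \adbmod{\annihec} \rangle_{\adbmod{\unihec}}$. Any object of $\adbmod{\unihec}$ is represented by a bounded complex of finitely generated modules, and the standard truncation distinguished triangles $\tau_{\leq k} X^\bullet \to X^\bullet \to \tau_{> k} X^\bullet$ let us assemble $X^\bullet$ from its cohomology modules, each placed in a single degree; these cohomologies are themselves finitely generated because $\unihec$ is Noetherian by \cref{everything is noetherian}.

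Given such an $M$, by \cref{Vigneras background} there is $N$ with $\annihilator^N M = 0$, so we have the finite descending chain $M \supseteq \annihilator M \supseteq \cdots \supseteq \annihilator^N M = 0$. Since $\unihec$ is Noetherian, each $\annihilator^i M$ is finitely generated, and hence so is each quotient $\annihilator^i M/\annihilator^{i+1} M$; being annihilated by $\annihilator$, these quotients are finitely generated $\annihec$-modules and so lie in $\adbmod{\annihec}$. A downward induction on $i$, with base case $\annihilator^N M = 0$, using the short exact sequences $0 \to \annihilator^{i+1}M \to \annihilator^i M \to \annihilator^i M/\annihilator^{i+1} M \to 0$ and their associated distinguished triangles, then places each $\annihilator^i M$ in $\langle \adbmod{\annihec} \rangle_{\adbmod{\unihec}}$; taking $i = 0$ yields the claim. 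The only real obstacle is tracking finite generation carefully along the filtration, which is precisely the role played by \cref{everything is noetherian}; the remainder of the argument is formal, resting only on the nilpotence of $\annihilator$ on $\uniblock$.
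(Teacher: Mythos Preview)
Your argument is correct and rests on the same two ingredients as the paper's proof: the nilpotence of $\annihilator$ on $\uniblock$ from \cref{Vigneras background}, and the Noetherianity of $\unihec$ from \cref{everything is noetherian} to keep all the pieces finitely generated. The only difference is organisational: the paper applies the $\annihilator$-adic filtration directly to the complex $\cx{M}$ termwise (so $\annihilator^i\cx{M}/\annihilator^{i+1}\cx{M}$ is already a bounded complex of finitely generated $\annihec$-modules), whereas you first pass to cohomology via truncation triangles and then filter each module; both routes are valid and neither is materially shorter.
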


\begin{proof}
    Inclusion of the right side in the left is immediate as all $\annihec$-modules are $\unihec$-modules.
    
    Let $\cx{M}$ be an object in $\adbmod{\unihec}$. Then by \cref{Vigneras background}, we have some finite $N$ such that $\annihilator^N\cx{M}=0$. Observe that, as $\cx{M}$ is a complex of unipotent representations, we have that for any $i\geq0$, $\annihilator^i\cx{M}=\annihilator_1^i\cx{M}$. Now, by \cref{everything is noetherian}, $\annihilator_1$ is finitely generated, and $\cx{M}$ can be taken to be a complex of finitely generated modules by definition of $\adbmod{\unihec}$, so the $\annihilator^i\cx{M}$ are also complexes of finitely generated modules. Hence the quotients $\annihilator^i\cx{M}/\annihilator^{i+1}\cx{M}$ are objects in $\adbmod{\annihec}$. Thus $\cx{M}$ is a repeated extension of complexes in $\adbmod{\annihec}$, and so is in $\langle\adbmod{\annihec}\rangle_{\adbmod{\unihec}}$.
\end{proof}

\begin{theorem}\label{vigneras generator derived generates}
    $\adbmod{\unihec}=\langle \annigen\rangle_{\adbmod{\unihec}}$.
\end{theorem}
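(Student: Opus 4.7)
The plan is to assemble the three results \cref{mod is proj}, \cref{anni to uni}, and \cref{Vigneras background}, connected by \cref{progenerators are classical generators}. The only nontrivial conceptual point is transferring classical generation from the derived category of $\annihec$-modules to that of $\unihec$-modules, which should be essentially formal.

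First, by \cref{anni to uni} it suffices to show the inclusion $\adbmod{\annihec}\subseteq\langle\annigen\rangle_{\adbmod{\unihec}}$, for then
\[
\adbmod{\unihec}=\langle\adbmod{\annihec}\rangle_{\adbmod{\unihec}}\subseteq\langle\langle\annigen\rangle_{\adbmod{\unihec}}\rangle_{\adbmod{\unihec}}=\langle\annigen\rangle_{\adbmod{\unihec}},
\]
with the reverse containment being automatic since $\annigen$ is a finitely generated unipotent representation and hence lies in $\adbmod{\unihec}$.

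Next, by \cref{Vigneras background}, $\annigen$ is a progenerator of $\anniblock$, which is equivalent to the category of (nondegenerate) $\annihec$-modules. Applying \cref{progenerators are classical generators} inside $\adMod{\annihec}$ gives $\per(\annihec)=\langle\annigen\rangle_{\per(\annihec)}$. Then \cref{mod is proj} identifies $\per(\annihec)=\adbmod{\annihec}$, so we obtain $\adbmod{\annihec}=\langle\annigen\rangle_{\adbmod{\annihec}}$.

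The final step is to promote this equality to the larger category $\adbmod{\unihec}$. Since $\annihec=\unihec/\annihilator_1$, every $\annihec$-module is naturally a $\unihec$-module, and a short exact sequence of $\annihec$-modules that is split over the ground ring remains such over $\unihec$; hence the inclusion $\adbmod{\annihec}\hookrightarrow\adbmod{\unihec}$ is triangulated, so any classical generation statement in $\adbmod{\annihec}$ is also valid in $\adbmod{\unihec}$. The mildest subtlety here, and the only place one needs to be careful, is that we use classical generation (closure under summands and iterated cones) rather than some stronger notion, so the argument does not require a derived adjunction — the concrete chain of triangles witnessing $M\in\langle\annigen\rangle_{\adbmod{\annihec}}$ is the same chain witnessing $M\in\langle\annigen\rangle_{\adbmod{\unihec}}$. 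Combining this with the first paragraph completes the proof.
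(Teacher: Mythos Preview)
Your proof is correct and follows essentially the same approach as the paper: both assemble \cref{Vigneras background}, \cref{progenerators are classical generators}, \cref{mod is proj}, and \cref{anni to uni} in the same order. The paper's version is terser and leaves implicit the step you spell out about promoting classical generation along the triangulated inclusion $\adbmod{\annihec}\hookrightarrow\adbmod{\unihec}$, but the underlying argument is the same.
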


\begin{proof}
    By \cref{Vigneras background}, $\annigen$ is a progenerator for $\anniblock$. Thus by \cref{progenerators are classical generators} we have that $\per(\annihec)=\langle \annigen\rangle_{\per(\annihec)}$. Thus we are done by \cref{mod is proj} and \cref{anni to uni}.
\end{proof}

We now want to show the second equality of \cref{main theorem}. The idea is to lift \cref{finite main theorem} from $\fingp$ to $\padicgp$. To do this, we first relate $\uniblockfin$ and $\uniblock$:

\begin{proposition}\label{unipotent induces to unipotent}
    Let $\pi_f\in\finrep$. If $\pi_f\in\uniblockfin$, then $\phorind_{\fingp,\maxcomp}^{\padicgp}\pi_f\in\uniblock$. Conversely, if $\pi_f\in\nonunblofin$, then $\phorind_{\fingp,\maxcomp}^{\padicgp}\pi_f\in\nonuniblock$.
\end{proposition}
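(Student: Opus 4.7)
Both halves rest on two basic facts: $\phorind = \ind_{\maxcomp}^{\padicgp}\infl_{\fingp}^{\maxcomp}$ is exact (inflation is trivially so, and compact induction from the open subgroup $\maxcomp$ preserves exactness), and $\phorind\finproj = \project$ by construction. Since membership in $\uniblockfin$ or $\uniblock$ is characterised by having every irreducible subquotient be a subquotient of $\finproj$ or of $\project$ respectively, the first assertion is immediate: for $\pi_f \in \uniblockfin$, any irreducible subquotient of $\phorind \pi_f$ lies inside $\phorind \sigma$ for some irreducible subquotient $\sigma$ of $\pi_f$, and exactness shows $\phorind \sigma$ is a subquotient of $\phorind \finproj = \project$, placing it in $\uniblock$.

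For the converse, the plan is to use the right adjoint $\phorres = (-)^{\propmax}$ of $\phorind$, which is exact since $\propmax$ is pro-$p$ and $\charstc \coeffs = l \neq p$. To show $\phorind \pi_f \in \nonuniblock$ it suffices to verify $\Hom_{\padicgp}(\phorind \pi_f, \tau) = 0$ for every $\tau \in \uniblock$, since this forces the block projection $\phorind \pi_f \twoheadrightarrow (\phorind \pi_f)_1$ to vanish. By Frobenius reciprocity this Hom space equals $\Hom_{\fingp}(\pi_f, \tau^{\propmax})$, and orthogonality of $\uniblockfin$ and $\nonunblofin$ makes it vanish once $\tau^{\propmax} \in \uniblockfin$. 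The whole claim thus reduces to showing that $\phorres$ carries $\uniblock$ into $\uniblockfin$.

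For this last step, since $\unigen_1$ is a progenerator for $\uniblock$ by \cref{Vigneras background}, exactness of $\phorres$ reduces everything to checking $\unigen_1^{\propmax} \in \uniblockfin$. I would attack this via a Mackey decomposition: $\project|_{\maxcomp} = \bigoplus_{g \in \maxcomp\backslash\padicgp/\iwa}\ind_{\maxcomp \cap g\iwa g^{-1}}^{\maxcomp}\triv$, and normality of $\propmax$ in $\maxcomp$ gives $\project^{\propmax} = \bigoplus_g \ind_{H'_g}^{\fingp}\triv$, where $H'_g$ is the image of $\maxcomp \cap g\iwa g^{-1}$ in $\fingp$. Choosing $g = \varpi^\lambda$ from a fundamental domain for the extended affine Weyl group, one identifies each $H'_g$ as a $\fingp$-conjugate of a standard parabolic subgroup, so each summand is a principal series sitting in $\uniblockfin$; parallel Mackey computations handle the remaining summands of $\funig$ appearing in $\unigen_1$. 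The main obstacle I anticipate is this Mackey/affine Weyl group bookkeeping; a cleaner alternative would be to appeal to Moy--Prasad depth-zero theory, which directly implies that parahoric restriction preserves unramified cuspidal support and hence the unipotent block decomposition.
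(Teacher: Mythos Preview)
The paper does not give an argument here at all: it simply cites Lemma~D14 of Vign\'eras, together with the fact that her Conjecture~$H_3$ holds for $\gln$. Your proposal is therefore a direct proof of what the paper outsources, and the overall strategy---exactness of $\phorind$ for the forward direction, and the adjunction $\phorind\dashv(-)^{\propmax}$ plus ``$\phorres$ preserves unipotent'' for the converse---is sound and is essentially the content behind the cited lemma.

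Two remarks. First, a small imprecision: you say membership in $\uniblock$ is \emph{characterised} by every irreducible subquotient being a subquotient of $\project$. In the non-banal case this fails as an ``only if'' (there can be unipotent irreducibles not occurring in $\project$; this is exactly why $\project$ is not a progenerator), but you only use the ``if'' direction, which is immediate from $\project\in\uniblock$, so the argument survives.

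Second, the genuine work in your plan is the claim that $(-)^{\propmax}$ sends $\uniblock$ to $\uniblockfin$. Reducing to $\unigen_1^{\propmax}$ via the progenerator is fine, and the Mackey computation for $\project^{\propmax}$ is straightforward. But the ``parallel Mackey computations'' for the Gelfand--Graev summands $\ind_{\parhor}^{\padicgp}\infl_{\finlevi}^{\parhor}\ind_{\finleviuni}^{\finlevi}\genchar$ are not a formality: one has to control how the generic characters $\genchar$ behave under restriction to $\maxcomp\cap g\parhor g^{-1}$ across all double cosets, and check that after taking $\propmax$-invariants every resulting $\fingp$-piece still has only unipotent constituents. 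This is doable but is precisely the substance of Vign\'eras's D14, which is why the paper cites rather than reproves it. Your suggested alternative via Moy--Prasad depth-zero theory is indeed the cleaner route and is morally what underlies her argument.
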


\begin{proof}
    This is \cite{vigneras2003schur}, Lemma D14 ($a_1$) and ($a_2$), noting that Conjecture~$H_3$ in said paper is stated to hold for $G=\glnp$.
\end{proof}

\begin{definition}
    Let $\ourgen=\phorind_{\fingp,\maxcomp}^{\padicgp}\fourg$. Thus, $\ourgen=\bigoplus_{\parhor\in\phs}\ind_{\parhor}^{\padicgp}\triv$.
\end{definition}

\begin{corollary}\label{our gen is in the unipotent}
    $\ourgen$ and $\phorind_{\fingp,\maxcomp}^{\padicgp}\fannig$ are both finitely generated and unipotent.
\end{corollary}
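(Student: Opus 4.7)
The plan is to deduce both assertions from the structural definitions together with basic preservation properties of parahoric induction, written $\phorind_{\fingp,\maxcomp}^{\padicgp}=\ind_{\maxcomp}^{\padicgp}\infl_{\fingp}^{\maxcomp}$.

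For unipotence, both $\fourg$ and $\fannig$ already live in $\uniblockfin$: the former is a finite direct sum of submodules of $\finproj$ (observed just after its definition), and the latter because $\finanni\supseteq\hecfinonu$ (observed just after the definition of $\fannig$). Applying \cref{unipotent induces to unipotent} directly then yields $\ourgen=\phorind_{\fingp,\maxcomp}^{\padicgp}\fourg\in\uniblock$ and $\phorind_{\fingp,\maxcomp}^{\padicgp}\fannig\in\uniblock$.

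For finite generation, note that $\fingp$ is a finite group, so every $\coeffs$-representation of $\fingp$ appearing in the definitions of $\fourg$ and $\funig$ is finite-dimensional; hence the finite direct sum $\fourg$ and the quotient $\fannig$ of the finite direct sum $\funig$ are also finite-dimensional, in particular finitely generated. It therefore suffices to check that $\phorind_{\fingp,\maxcomp}^{\padicgp}$ preserves finite generation. Inflation along $\maxcomp\twoheadrightarrow\fingp$ does so trivially, since the underlying module is unchanged. For compact induction from the compact open subgroup $\maxcomp$, if $m_1,\dots,m_k$ generate a smooth $\maxcomp$-representation $M$, then the functions $f_i\in\ind_{\maxcomp}^{\padicgp}M$ supported on $\maxcomp$ with $f_i(1)=m_i$ generate the induced representation as a $\padicgp$-module, because any element of $\ind_{\maxcomp}^{\padicgp}M$ is a finite sum of $\padicgp$-translates of functions supported on single cosets $g\maxcomp$.

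The only step not already recorded in the paper is the preservation of finite generation under compact induction, which is routine; I therefore do not expect any real obstacle to this corollary.
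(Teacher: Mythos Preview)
Your proof is correct and follows essentially the same approach as the paper: both argue that $\fourg$ and $\fannig$ are unipotent and finitely generated, and that parahoric induction preserves these properties. The paper's proof is a one-line assertion of these facts, whereas you have unpacked the justifications---citing \cref{unipotent induces to unipotent} explicitly for unipotence and spelling out the standard argument that compact induction from a compact open subgroup preserves finite generation---but the underlying structure is identical.
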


\begin{proof}
    $\ourgen$ and $\phorind_{\fingp,\maxcomp}^{\padicgp}\fannig$ are the image under parahoric induction of $\fourg$ and $\fannig$ respectively. Both $\fourg$ and $\fannig$ are unipotent and finitely generated, and parahoric induction preserves both properties.
\end{proof}

Thus both $\ourgen$ and $\phorind_{\fingp}^{\padicgp}(\fannig)$ are in $\adbmod{\hecfinuni}$. Now we are ready to lift \cref{finite main theorem} to $\padicgp$.

\begin{corollary}\label{induced finite generators are derived equivalent}
    $\langle \phorind_{\fingp}^{\padicgp}(\fannig)\rangle_{\adbmod{\unihec}}=\langle \ourgen\rangle_{\adbmod{\unihec}}$.
\end{corollary}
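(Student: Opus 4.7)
The plan is to deduce this statement as a direct transfer of \cref{finite generators equivalent} from the finite setting to the $p$-adic setting via parahoric induction. Parahoric induction $\phorind_{\fingp,\maxcomp}^{\padicgp}$ is exact, since it is the composition of the (exact) inflation along the quotient $\maxcomp\twoheadrightarrow\fingp$ with compact induction from the open compact subgroup $\maxcomp\leq\padicgp$. It also preserves finite generation (a finitely generated $\fingp$-module produces a finitely generated $\padicgp$-module via compact induction, generated by the characteristic functions of $\maxcomp$ times the original generators), and by \cref{unipotent induces to unipotent} it takes unipotent representations to unipotent representations. Combining these, $\phorind_{\fingp,\maxcomp}^{\padicgp}$ restricts to an exact functor between the abelian categories of finitely generated unipotent representations, and therefore descends to a triangulated functor
\[
\phorind_{\fingp,\maxcomp}^{\padicgp} : \adbmod{\hecfinuni} \longrightarrow \adbmod{\unihec}.
\]

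The next observation I would invoke is the purely formal fact that any triangulated functor $F:\mathcal{T}\to\mathcal{T}'$ preserves classical generation: if $X\in\langle Y\rangle_{\mathcal{T}}$, then $F(X)\in\langle F(Y)\rangle_{\mathcal{T}'}$. This follows at once from the definition of $\langle Y\rangle_{\mathcal{T}}$ as the smallest full subcategory closed under shifts, cones, and direct summands containing $Y$, since $F$ preserves all three of these operations.

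Applying this to our triangulated functor together with \cref{finite generators equivalent}, which tells us that $\fannig\in\langle\fourg\rangle_{\adbmod{\hecfinuni}}$ and $\fourg\in\langle\fannig\rangle_{\adbmod{\hecfinuni}}$, and noting that $\phorind_{\fingp,\maxcomp}^{\padicgp}(\fourg)=\ourgen$ by definition, we obtain both
\[
\phorind_{\fingp,\maxcomp}^{\padicgp}(\fannig)\in\langle\ourgen\rangle_{\adbmod{\unihec}}
\quad\text{and}\quad
\ourgen\in\langle\phorind_{\fingp,\maxcomp}^{\padicgp}(\fannig)\rangle_{\adbmod{\unihec}}.
\]
Each containment upgrades to a containment of classical closures, and together these yield the desired equality.

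There is essentially no real obstacle here: this is a short formal argument. The only substantive content is already bundled into the three inputs, namely (i) the exactness and preservation of finite generation and unipotency by $\phorind_{\fingp,\maxcomp}^{\padicgp}$, (ii) the Finite Main Theorem \cref{finite generators equivalent}, and (iii) the trivial compatibility of classical generation with triangulated functors. If anything warrants care, it is checking at the level of finitely generated objects that the induced functor of derived categories is well-defined, but this is immediate from exactness plus preservation of the two defining properties.
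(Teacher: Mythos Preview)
Your proof is correct and follows exactly the same approach as the paper, which simply states that parahoric induction is exact so the claim is immediate from \cref{finite main theorem}. You have merely unpacked the implicit details (preservation of finite generation and unipotency, the formal compatibility of triangulated functors with classical generation) that the paper leaves to the reader or has already recorded in \cref{unipotent induces to unipotent} and \cref{our gen is in the unipotent}.
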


\begin{proof}
    Parahoric induction is exact, so this is immediate from \cref{finite main theorem}.
\end{proof}

We want to conclude by \cref{vigneras generator derived generates}, which says that $\adbmod{\unihec}=\langle\annigen\rangle_{\adbmod{\unihec}}$. Unfortunately,  $\phorind_{\fingp,\maxcomp}^{\padicgp}\fannig=\unigen/(\phorind_{\fingp,\maxcomp}^{\padicgp}\finanni\funig)$ is not a priori equal to $\annigen=\unigen/\annihilator \unigen$. It thus remains to show that $\phorind_{\fingp,\maxcomp}^{\padicgp}\fannig$ and $\annigen$ are in fact equal, that is, that $\phorind_{\fingp,\maxcomp}^{\padicgp}\finanni\funig=\annihilator \unigen$. 

To prove this equality, it is simplest to work with $\hecglob$-modules. We write $\indicate_{X}$ for the indicator function of a set $X$. We may view $\heccomp$ as the subalgebra of $\hecglob$ consisting of functions supported on $\maxcomp$. The image of the endomorphism of $\heccomp$ given by $f\mapsto \indicate_{\propmax}f\indicate_{\propmax}$ may be identified with $\hecfin$ via $g\leftrightarrow\indicate_{g\propmax}$. Viewing representations of $\fingp$, $\maxcomp$ and $\padicgp$ as modules over $\hecfin$, $\heccomp$ and $\hecglob$ respectively, parahoric induction is then just $\hecglob\otimes_{\heccomp}-$, where $\hecglob$ and $\hecfin$ are viewed as $\heccomp$-algebras via the aforementioned maps.

Using this, we may rewrite $\phorind_{\fingp,\maxcomp}^{\padicgp}\finanni\funig$ and $\annihilator\unigen$ as
\begin{align}\label{formula for induced finite generator}
    \begin{split}
        \phorind_{\fingp,\maxcomp}^{\padicgp}\finanni\funig &= \hecglob\otimes_{\heccomp}\finanni\funig
    \end{split}
\end{align}
and
\begin{align}\label{formula for generator}
    \begin{split}
        \annihilator\unigen &= \annihilator\hecglob\otimes_{\heccomp}\funig 
        =\annihilator\otimes_{\heccomp}\hecfin\funig
    \end{split}
\end{align}
respectively.

Hence, it will suffice to show that $\hecglob\otimes_{\heccomp}\finanni=\annihilator\otimes_{\heccomp}\hecfin$, as subsets of $\hecglob\otimes_{\heccomp}\hecfin$.

There are a series of simplifications that can be made to this picture. First, recall that, by the Iwahori decomposition, $\project=\ind_{\iwa}^{\padicgp}\triv$ is generated by elements of the form $\indicate_{iw\iwa}$ for $w\in\weyl$ and $i\in\iwa$, and $\hecglob$ acts on these elements by convolution on the left. Thus we may view $\project$ as a left ideal in $\hecglob$. Hence, if $Z\in\hecglob$, then $Z\in\annihilator$ if and only if, for all $i\in\iwa$ and $w\in\weyl$, we have that $Z\indicate_{iw\iwa}=0$.

Similarly, by the Bruhat decomposition, $\finproj=\ind_{\finbor}^{\fingp}\triv$ is generated by the elements $iw_f\finbor$ for $w_f\in\finweyl$ and $i\in \finbor$, and $\hecfin$ acts on these elements by left multiplication, so $\finproj$ can be analogously viewed as a left ideal in $\hecfin$. Thus, if $Z\in\hecfin$, then $Z\in\finanni$ if and only if, for all $i\in\finbor$ and $w_f\in\finweyl$, we have that $Ziw_f\finbor=0$ .

Next, observe that the map $g\mapsto\indicate_{g\propmax}$ identifies $\hecfin$ with a subalgebra of $\heccomp$, and thus of $\hecglob$. This identifies the element $iw_f\finbor\in\finproj$ with $\indicate_{iw_f\iwa}\in\hecglob$, and furthermore identifies $\finanni\subseteq\hecfin$ as a subset of $\hecglob$. Therefore, $Z\in\finanni$ if and only if, for all $i\in\finbor$ and $w_f\in\finweyl$, we have that $Z\indicate_{iw_f\iwa}=0$.

Furthermore, the above map is a splitting of the quotient map $\heccomp\rightarrow\hecfin$, and so in $\hecglob\otimes_{\heccomp}\hecfin$ the element $f\otimes g$ is equal to the element $f\indicate_{g\propmax}\otimes 1$. Hence, $\hecglob\otimes_{\heccomp}\finanni=\hecglob\finanni\otimes 1$. Now, $\annihilator$ is a left ideal in $\hecglob$, and so to show $\hecglob\finanni\otimes 1\subseteq\annihilator\otimes_{\heccomp}\hecfin$ it is enough to show that $\finanni\otimes 1\subseteq\annihilator\otimes_{\heccomp}\hecfin$ as subsets of $\hecglob\otimes_{\heccomp}\hecfin$. Thus, in particular, it suffices to show that $\finanni\subseteq\annihilator$.

Conversely, as $f\otimes g=f\indicate_{g\propmax}\otimes 1$ we also have that $\annihilator\otimes_{\heccomp}\hecfin=\annihilator\hecfin\otimes 1$, and as $\annihilator$ as is a two-sided ideal we hence have $\annihilator\otimes_{\heccomp}\hecfin=\annihilator\otimes 1=\annihilator\indicate_{\propmax}\otimes 1$. Thus, to show that $\annihilator\otimes_{\heccomp}\hecfin\subseteq \hecglob\otimes_{\heccomp}\finanni$ it is enough to show that $\annihilator\indicate_{\propmax}\subseteq \hecglob\finanni$.

\begin{lemma}\label{finite annihilator lives in annihilator}
    $\hecglob\otimes_{\heccomp}\finanni=\annihilator\otimes_{\heccomp}\hecfin$.
\end{lemma}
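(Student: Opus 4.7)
The reductions in the paragraphs preceding the lemma show it suffices to prove the two inclusions $\finanni\subseteq\annihilator$ and $\annihilator\indicate_\propmax\subseteq\hecglob\finanni$.

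For the first inclusion, my plan is to compute the $\propmax$-invariant subspace $\project^\propmax$ explicitly as a $\fingp$-representation and then propagate the resulting annihilation to all of $\project$ using bi-$\propmax$-invariance of $\finanni$. By Mackey, $\res_\maxcomp\project$ decomposes as $\bigoplus_{t}V_t$, where $t$ ranges over representatives for the $\maxcomp$-orbits on $\padicgp/\iwa$ (parametrised by translations in the Iwahori-Weyl group $\weyl$) and $V_t=\ind_{H_t}^\maxcomp\triv$ with $H_t=\maxcomp\cap t\iwa t^{-1}$. The key structural point is that $H_t\propmax$ is always a conjugate of $\iwa$ inside $\maxcomp$: its image $\bar H_t\subseteq\fingp=\maxcomp/\propmax$ is $\finweyl$-conjugate to $\finbor$, obtained by applying the permutation that reorders the coordinates of $t$ into a dominant translation, so $H_t\propmax$ is the preimage in $\maxcomp$ of a conjugate of $\finbor$, namely a conjugate of $\iwa$. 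This gives $V_t^\propmax\cong\coeffs[\maxcomp/H_t\propmax]\cong\finproj$ as $\fingp$-representations, so $\project^\propmax\cong\bigoplus_t\finproj$ is annihilated by $\finanni$. Since any $Z\in\finanni$ satisfies $Z=e_\propmax Ze_\propmax$, for $v\in\project$ we have $Zv=e_\propmax Z(e_\propmax v)$ with $e_\propmax v\in\project^\propmax$, giving $Zv=0$ and hence $\finanni\subseteq\annihilator$.

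For the second inclusion, I interpret both sides inside the right-$\propmax$-invariant subspace $\hecglob e_\propmax\cong\hecglob\otimes_\heccomp\hecfin$; the inclusion is equivalent to $\annihilator$ annihilating the quotient $\hecglob\otimes_\heccomp\hecfinanni\cong\phorind_{\fingp,\maxcomp}^{\padicgp}\hecfinanni$. A tensor-product argument parallel to the first inclusion (using that any $M\in\anniblockfin$ becomes $\propmax$-trivial after inflation, so that the computation of $(\phorind M)^\propmax$ as a direct sum of $M$-like summands still lies in $\anniblockfin$) yields $\finanni$-annihilation of this induction. To upgrade this to $\annihilator$-annihilation, I invoke Vigneras's Theorem~5.13: $\annigen$ is a progenerator of $\anniblock$, and combining this with the surjection $\phorind\fannig\twoheadrightarrow\annigen$ together with the progenerator property transfers the block-annihilation property from the finite to the $p$-adic setting.

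The main obstacle is the second inclusion: while the first has a direct geometric proof via the Mackey computation of $\project^\propmax$, the second requires carefully bridging the gap between the $\finanni$-annihilation one gets for free from the same Mackey argument and the stronger $\annihilator$-annihilation needed, using Vigneras's block-theoretic results. The identification $\project^\propmax\cong\bigoplus_t\finproj$ is the cornerstone that makes both steps work, as it tightly couples the finite ideal $\finanni$ with the $p$-adic ideal $\annihilator$.
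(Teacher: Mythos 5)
Your proof of the first inclusion $\finanni\subseteq\annihilator$ is correct and takes a genuinely different route from the paper. The paper argues by direct convolution: for $Z\in\finanni$ it writes $w=w_fw_0$ with $w_0$ minimal in $\finweyl\backslash\weyl$, invokes a lemma of Morris to get $\propmax w_0\iwa=\iwa w_0\iwa$, and then regroups the sum $\sum_g r_g\indicate_{giw_f\iwa w_0\iwa}$ by cosets of $\iwa$ in $\maxcomp$. Your Mackey computation of $\project^{\propmax}$ packages the same geometric input (the structure of $\propmax(\maxcomp\cap g\iwa g^{-1})$ for double-coset representatives $g$) into the cleaner statement that $\project^{\propmax}$ is a direct sum of copies of $\finproj$ as a $\fingp$-representation, after which the identity $Z=\indicate_{\propmax}Z\indicate_{\propmax}$ finishes the job. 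Your identification of $\overline{H_t\propmax}$ with a $\finweyl$-conjugate of $\finbor$ via the sorting permutation is correct, though it needs to be written out --- it plays the role the Morris citation plays in the paper; note also that the argument only needs $\finanni$ to kill each $V_t^{\propmax}$, so it would survive even if $H_t\propmax$ were merely a conjugate of a standard parahoric.

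The second inclusion has a genuine gap. Your reformulation --- that $\annihilator\otimes_{\heccomp}\hecfin\subseteq\hecglob\otimes_{\heccomp}\finanni$ is equivalent to $\annihilator$ annihilating $\phorind_{\fingp,\maxcomp}^{\padicgp}\hecfinanni$, i.e.\ to $\phorind$ carrying $\anniblockfin$ into $\anniblock$ --- is fine, but the proposed proof of that statement does not close. A Mackey argument of the same shape as before can only give you that $\finanni$ annihilates the induced module, and since you have just proved $\finanni\subseteq\annihilator$, annihilation by $\finanni$ is the \emph{weaker} condition; it does not imply annihilation by $\annihilator$. The proposed upgrade via Vign\'eras fails for the same reason in module-theoretic terms: the surjection $\phorind_{\fingp,\maxcomp}^{\padicgp}\fannig\twoheadrightarrow\annigen$ (which exists precisely because of the first inclusion) together with $\annigen$ being a progenerator of $\anniblock$ says nothing about whether $\phorind_{\fingp,\maxcomp}^{\padicgp}\fannig$ itself lies in $\anniblock$: a module surjecting onto an object of $\anniblock$ need not be killed by $\annihilator$. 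Indeed, the statement ``$\phorind_{\fingp,\maxcomp}^{\padicgp}\fannig\in\anniblock$'' is equivalent to $\annihilator\unigen\subseteq\phorind_{\fingp,\maxcomp}^{\padicgp}(\finanni\funig)$, which is exactly (the $\funig$-instance of) the inclusion you are trying to prove, so the argument is circular. The paper's proof of this direction is elementary and direct: write $Z\indicate_{\propmax}=\sum_{g\in\padicgp/\maxcomp}\indicate_{g\propmax}Z_g$ with $Z_g\in\hecfin$; since $\indicate_{iw_f\iwa}\in\project$ for $w_f\in\finweyl$ and $i\in\iwa$, one has $\sum_g\indicate_{g\propmax}Z_g\indicate_{iw_f\iwa}=0$, and the terms have supports in the pairwise disjoint sets $g\maxcomp$, so each $Z_g\indicate_{iw_f\iwa}=0$, i.e.\ each $Z_g\in\finanni$. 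You should replace your second paragraph with an argument of this kind.
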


\begin{proof}
   By the previous remarks, to show the left-to-right inclusion, it suffices to show that for any $Z\in\finanni$, $w\in\weyl$, and $i\in\iwa$, we have $Z\indicate_{iw\iwa}=0$.

   First, observe that $Z\in\hecfin$, and so $Z$ is a linear combination of terms of the form $\indicate_{g\propmax}$. Now, as $\mu(\propmax)=1$ by our convention for normalisation, we have $Z=Z\indicate_{\propmax}$, and so $Z\indicate_{iw\iwa}=Z\indicate_{\propmax}\indicate_{iw\iwa}$.
   
   Now, by the convolution formula,
   \begin{align*}
    \indicate_{\propmax}\indicate_{iw\iwa}(x) &=\mu(\propmax\cap iw\iwa w^{-1}i^{-1})\sum_{k\in \propmax/(\propmax\cap iw\iwa w^{-1}i^{-1})}\indicate_{\propmax}(k)\indicate_{iw\iwa}(k^{-1}x) \\
    &=[\propmax:\propmax\cap iw\iwa w^{-1}i^{-1}]^{-1}\indicate_{\propmax iw\iwa}.
   \end{align*} 
 As $\propmax$ is a pro-$p$ group, $c=[\propmax:\propmax\cap iw\iwa w^{-1}i^{-1}]^{-1}$ is well-defined and nonzero in $\coeffs$, so $Z\indicate_{\propmax}\indicate_{iw\iwa}=cZ\indicate_{\propmax iw\iwa}$.

   Let $w_0$ be a minimal length coset representative for $w$ in $\finweyl\backslash\weyl$. Hence $w=w_fw_0$ for some $w_f\in\finweyl$. 

   As $w_f\in\finweyl\subseteq\maxcomp$, $w_f$ normalises $\propmax$. Similarly, as $i\in\iwa\subseteq\maxcomp$, we have that $i$ also normalises $\propmax$. Hence $Z\indicate_{\propmax iw\iwa}=Z\indicate_{iw_f\propmax w_0\iwa}$. 

   Now, by Lemma 3.19 and Variant 3.22 of \cite{morris1993tamely}, we have that $\propmax(\maxcomp\cap w_0\iwa w_0^{-1})$ is a standard parahoric subgroup, and so in particular contains $\iwa$. Multiplying by $w_0\iwa$ on the right thus gives 
   \begin{align*}
    \begin{split}
           \iwa w_0\iwa &\subseteq \propmax(\maxcomp \cap w_0\iwa w_0^{-1})w_0\iwa \\
           &\subseteq \propmax(\maxcomp w_0\iwa\cap w_0\iwa) \\
           &= \propmax w_0\iwa.
    \end{split}
   \end{align*}
     Thus we have $\propmax w_0 \iwa \subseteq \iwa w_0\iwa\subseteq \propmax w_0\iwa$, so we have equality $\iwa w_0\iwa=\propmax w_0\iwa$. Hence $Z\indicate_{iw_f\propmax w_0\iwa}=Z\indicate_{iw_f\iwa w_0\iwa}$.

    Now, we write $Z=\sum_{g\in\fingp}r_g\indicate_{g\propmax}$ for some $r_g\in\coeffs$. Recall that $i$ and $w_f$ normalise $\propmax\subseteq\iwa$, so $iw_f\iwa w_0\iwa$ is left-$\propmax$-invariant. Thus, as $\mu(\propmax)=1$, the convolution formula gives $Z\indicate_{iw_f\iwa w_0\iwa}=\sum_{g\in\fingp}r_g\indicate_{giw_f\iwa w_0\iwa}$.

    By definition, $Z\in\finanni$. But recall that, by the previous remarks, this means that $Z\indicate_{iw_f\iwa}=0$, that is, that $\sum_{g\in\fingp}r_g\indicate_{giw_f\iwa}=0$. In particular, for any fixed coset $k\iwa$ of $\iwa$ in $\maxcomp$, we have that
    \[\sum_{\mathclap{\substack{g\in\fingp \\ k\iwa=giw_f\iwa}}}r_g=0.\]
    Hence
    \begin{align*}
    \sum_{g\in\fingp}r_g\indicate_{giw_f\iwa w_0\iwa}
    &= \sum_{k\in\maxcomp/\iwa}\left(\sum_{\substack{g\in\fingp \\ k\iwa=giw_f\iwa}}r_g\right)\indicate_{k\iwa w_0\iwa} \\
    &=\sum_{k\in\maxcomp/\iwa} 0  \\
    &=0
    \end{align*}  
    
    Putting this all together, we obtain $Z\indicate_{iw\iwa}=0$. Thus we have the left-to-right inclusion.

    To show the right-to-left inclusion, by the previous remarks it suffices to show that, for any $Z\in\annihilator$, we can write $Z\indicate_{\propmax}=\sum_{j=1}^m h_j Z_j$ for $h_j\in\hecglob$ and $Z_j\in\hecfin$, such that, for all $j$, $w_f\in\finweyl$, and $i\in\iwa$, we have that $Z_j\indicate_{iw_f\iwa}=0$.

    $Z\indicate_{\propmax}$ is right-$\propmax$-invariant, and so, since elements of $\hecglob$ have compact support, we must have $Z\indicate_{\propmax}=\sum_{g\in\padicgp/\propmax}s_g\indicate_{g\propmax}$ for some $s_g\in\coeffs$, with all but finitely many $s_g$ zero.

    We may decompose this sum as $\sum_{g\in\padicgp/\propmax}s_g\indicate_{g\propmax}=\sum_{g\in\padicgp/\maxcomp}\sum_{k\in\maxcomp/\propmax}s_{gk}\indicate_{gk\propmax}$. Observe that the inner sum is finite, and as all but finitely many $s_{gk}$ are zero, the outer sum is also finite.

    But, as $\propmax$ is normal in $\maxcomp$ and $\mu(\propmax)=1$, we have $\indicate_{gk\propmax}=\indicate_{g\propmax k \propmax}=\indicate_{g\propmax}\indicate_{k\propmax}$. Hence $\sum_{g\in\padicgp/\maxcomp}\sum_{k\in\maxcomp/\propmax}s_{gk}\indicate_{gk\propmax}=\sum_{g\in\padicgp/\maxcomp}\indicate_{g\propmax}\sum_{k\in\maxcomp/\propmax}s_{gk}\indicate_{k\propmax}$.

    Let $Z_{g}=\sum_{k\in\maxcomp/\propmax}s_{gk}\indicate_{k\propmax}$. Observe then that $Z_{g}\in\hecfin$, and that $Z\indicate_{\propmax}=\sum_{g\in\padicgp/\maxcomp}\indicate_{g\propmax}Z_g$ with all but finitely many $Z_g$ zero, so the sum is finite.

    Now let $w_f\in\finweyl$ and $i\in\iwa$. Then $\indicate_{iw_f\iwa}\in\project$. Thus, as $Z\in\annihilator=\Ann(\project)$, we have that $Z\indicate_{iw_f\iwa}=0$.
    
    As $\iwa$ contains $\propmax$, we have $\indicate_{iw_f\iwa}=\indicate_{iw_f\propmax\iwa}$. As $\propmax$ is normal in $\maxcomp$, and as $\finweyl$ and $\iwa$ are subgroups of $\maxcomp$, we have $\indicate_{iw_f\propmax\iwa}=\indicate_{\propmax i w_f \iwa}$. Hence we have that $Z\indicate_{iw_f\iwa}=Z\indicate_{\propmax i w_f \iwa}$.

    Now, as $\mu(\propmax)=1$, we have that $Z\indicate_{\propmax i w_f \iwa}=Z\indicate_{\propmax}\indicate_{iw_f\iwa}=\sum_{g\in\padicgp/\maxcomp}\indicate_{g\propmax}Z_g\indicate_{iw_f\iwa}$.

    As $Z_g$ and $\indicate_{iw_f\iwa}$ are elements of $\hecfin$, so is $Z_g\indicate_{iw_f\iwa}$. Hence the support of $Z_g\indicate_{iw_f\iwa}$ is contained in $\maxcomp$, and so the support of $\indicate_{g\propmax}Z_g\indicate_{iw_f\iwa}$ is contained in $g\maxcomp$. In particular, as $g$ ranges over cosets in $\padicgp/\maxcomp$, each $\indicate_{g\propmax}Z_g\indicate_{iw_f\iwa}$ term in the sum has disjoint support. Thus, as the sum is $0$, each term $\indicate_{g\propmax}Z_g\indicate_{iw_f\iwa}$ must be zero.

    Now, $\indicate_{\propmax g^{-1}}\indicate_{g\propmax}=\mu(g\propmax g^{-1})\indicate_{\propmax}$. Furthermore, since $\propmax$ is pro-$p$, so is $g\propmax g^{-1}$, and so $\mu(g\propmax g^{-1})$ is invertible. Hence, left multiplication of $\indicate_{g\propmax}Z_g\indicate_{iw_f\iwa}$ by $\mu(g\propmax g^{-1})^{-1}\indicate_{\propmax g^{-1}}$ gives that $0=\indicate_{\propmax}Z_g\indicate_{iw_f\iwa}=Z_g\indicate_{iw_f\iwa}$. Thus we have the right-to-left inclusion.
\end{proof}

Thus we get our desired equality.

\begin{corollary}\label{annigen induced from fingen}
    $\annigen=\phorind_{\fingp,\maxcomp}^{\padicgp}\fannig$.
\end{corollary}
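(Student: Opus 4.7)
The plan is to deduce this corollary as an essentially formal consequence of Lemma \ref{finite annihilator lives in annihilator} combined with exactness of parahoric induction. Applying $\phorind_{\fingp,\maxcomp}^{\padicgp}$ (which is exact, being inflation followed by induction from an open subgroup) to the defining short exact sequence $0\to\finanni\funig\to\funig\to\fannig\to 0$ yields
\[
\phorind_{\fingp,\maxcomp}^{\padicgp}\fannig \;\cong\; \unigen\big/\phorind_{\fingp,\maxcomp}^{\padicgp}(\finanni\funig).
\]
Since by definition $\annigen=\unigen/\annihilator\unigen$, it thus suffices to identify the two submodules $\phorind_{\fingp,\maxcomp}^{\padicgp}(\finanni\funig)$ and $\annihilator\unigen$ of $\unigen$.

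To do this, I would re-use the tensor-product rewritings already set up just before the statement of Lemma \ref{finite annihilator lives in annihilator}. By equations (\ref{formula for induced finite generator}) and (\ref{formula for generator}) one has $\phorind_{\fingp,\maxcomp}^{\padicgp}\finanni\funig = \hecglob\otimes_{\heccomp}\finanni\funig$ and $\annihilator\unigen = \annihilator\otimes_{\heccomp}\hecfin\funig$. Both of these submodules of $\unigen = \hecglob\otimes_{\heccomp}\funig$ are realised as images of the natural map $(\hecglob\otimes_{\heccomp}\hecfin)\otimes_{\hecfin}\funig\to\unigen$: the first is the image of the subset $\hecglob\otimes_{\heccomp}\finanni$ acting on $\funig$, and the second is the image of $\annihilator\otimes_{\heccomp}\hecfin$.

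Lemma \ref{finite annihilator lives in annihilator} supplies exactly the equality $\hecglob\otimes_{\heccomp}\finanni = \annihilator\otimes_{\heccomp}\hecfin$ of these two subsets, and applying both sides to $\funig$ transfers the equality into $\annihilator\unigen = \phorind_{\fingp,\maxcomp}^{\padicgp}(\finanni\funig)$. Passing to the quotient gives the claim. The hard work has been done in the lemma, so the only subtlety left is the routine check that the two tensor expressions genuinely factor through the common bimodule $\hecglob\otimes_{\heccomp}\hecfin$ with compatible actions on $\funig$; this is immediate from the splitting $\hecfin\hookrightarrow\heccomp$, $g\mapsto\indicate_{g\propmax}$, used throughout the proof of the lemma.
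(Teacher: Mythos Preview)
Your proof is correct and follows essentially the same route as the paper: both use exactness of parahoric induction to reduce to showing $\phorind_{\fingp,\maxcomp}^{\padicgp}(\finanni\funig)=\annihilator\unigen$, then invoke the tensor-product rewritings \eqref{formula for induced finite generator} and \eqref{formula for generator} together with Lemma~\ref{finite annihilator lives in annihilator}. Your version is slightly more explicit about the bimodule factorisation through $\hecglob\otimes_{\heccomp}\hecfin$, but this is just an elaboration of the same argument.
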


\begin{proof}
    By \cref{finite annihilator lives in annihilator}, together with \cref{formula for induced finite generator} and \cref{formula for generator}, we have that $\phorind_{\fingp,\maxcomp}^{\padicgp}\finanni\funig=\annihilator\unigen$. Thus $\phorind_{\fingp,\maxcomp}^{\padicgp}\fannig=\unigen/(\phorind_{\fingp,\maxcomp}^{\padicgp}\finanni\funig)=\unigen/\annihilator \unigen=\annigen$.
\end{proof}

We now have all the ingredients to prove the second equality of \cref{main theorem}.

\begin{theorem}
    $\adbmod{\unihec}=\langle \ourgen\rangle_{\adbmod{\unihec}}$
\end{theorem}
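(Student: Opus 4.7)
The plan is to chain together the three principal results already in hand. By \cref{vigneras generator derived generates}, we have $\adbmod{\unihec}=\langle\annigen\rangle_{\adbmod{\unihec}}$. By \cref{annigen induced from fingen}, we have $\annigen=\phorind_{\fingp,\maxcomp}^{\padicgp}\fannig$, so $\langle\annigen\rangle_{\adbmod{\unihec}}=\langle\phorind_{\fingp,\maxcomp}^{\padicgp}\fannig\rangle_{\adbmod{\unihec}}$. Finally, by \cref{induced finite generators are derived equivalent}, $\langle\phorind_{\fingp,\maxcomp}^{\padicgp}\fannig\rangle_{\adbmod{\unihec}}=\langle\ourgen\rangle_{\adbmod{\unihec}}$. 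Stringing these together yields the claim.

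The substantive content has all been discharged earlier in the section. The hard step was \cref{finite annihilator lives in annihilator}, whose two-sided inclusion is what lets \cref{annigen induced from fingen} identify $\phorind_{\fingp,\maxcomp}^{\padicgp}\fannig$ with $\annigen$ rather than merely a quotient or submodule. Given that, the present theorem is a one-line combination, so the write-up should simply cite the three results in order and leave no further calculation.
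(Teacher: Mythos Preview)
Your proposal is correct and matches the paper's own proof essentially verbatim: the paper combines \cref{vigneras generator derived generates}, \cref{induced finite generators are derived equivalent}, and \cref{annigen induced from fingen} in exactly the same way. Your additional remark about the role of the two-sided inclusion in \cref{finite annihilator lives in annihilator} is accurate commentary, though not needed in the proof itself.
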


\begin{proof}
    We already know from \cref{vigneras generator derived generates} that $\adbmod{\unihec}=\langle \annigen\rangle_{\adbmod{\unihec}}$, and from \cref{induced finite generators are derived equivalent} that $\langle \phorind_{\fingp,\maxcomp}^{\padicgp}(\fannig)\rangle_{\adbmod{\unihec}}=\langle \ourgen\rangle_{\adbmod{\unihec}}$. But by \cref{annigen induced from fingen} we have $\annigen=\phorind_{\fingp,\maxcomp}^{\padicgp}\fannig$.
\end{proof}

\begin{corollary}\label{derived equivalence}
    Let $\gencx$ be a projective resolution of $\ourgen$ in $\smrep$. There is a triangulated equivalence $\adbmod{\unihec}\simeq\per(\dgend_{\padicgp}(\gencx))$.
\end{corollary}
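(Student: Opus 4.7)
The plan is to invoke \cref{triangulated equivalence} directly, with $A=\unihec$, with $\mathcal{T}=\adbmod{\unihec}$, and with $M=\ourgen$. We therefore have to verify the three hypotheses of that proposition, all of which are already established or essentially immediate from earlier results.

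First, I would note that $\adbmod{\unihec}$ is a full triangulated subcategory of $\adMod{\unihec}$ closed under direct summands; this was observed directly after the definition of $\adbmod{A}$ in Section~2. Second, I would observe that $\ourgen$ is a bona fide object of $\Mod(\unihec)$ (and hence, concentrated in degree zero, an object of $\adbmod{\unihec}$), since by \cref{our gen is in the unipotent} it is both unipotent and finitely generated. Third, the condition $\langle \ourgen\rangle_{\adbmod{\unihec}}=\adbmod{\unihec}$ is precisely the second equality of the main theorem, which was proven immediately above the corollary.

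With all three hypotheses in hand, \cref{triangulated equivalence} gives the triangulated equivalence $\adbmod{\unihec}\simeq\per(\dgend_{\unihec}(\gencx))$. The only minor subtlety is identifying $\dgend_{\unihec}(\gencx)$ with $\dgend_{\padicgp}(\gencx)$, but this follows from the fact that $\smrep$ is equivalent to the category of nondegenerate $\hecglob$-modules and $\ourgen$ lies in the unipotent block, so the block decomposition lets us replace $\hecglob$ by its unipotent summand $\unihec$ without changing any Hom-spaces among objects of $\uniblock$. There is no real obstacle here: all the heavy lifting has already been carried out, and this corollary is just the packaging of the main theorem through the abstract dg-Morita statement of \cref{triangulated equivalence}.
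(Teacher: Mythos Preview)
Your proposal is correct and follows exactly the paper's approach: the paper's proof is the single line ``This follows applying \cref{triangulated equivalence} to the category $\adbmod{\unihec}$ and its classical generator $\ourgen$,'' and you have simply spelled out the verification of the hypotheses and the identification $\dgend_{\padicgp}(\gencx)=\dgend_{\unihec}(\gencx)$ that the paper leaves implicit.
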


\begin{proof}
    This follows applying \cref{triangulated equivalence} to the category $\adbmod{\unihec}$ and its classical generator $\ourgen$.
\end{proof}

\printbibliography

\end{document}